\newtheorem{thm}{Theorem}
\newtheorem*{thm*}{Theorem}
\newtheorem{prop}{Proposition}[section]
\newtheorem{lem}[prop]{Lemma}
\newtheorem{ques}{Question}
\newtheorem{cor}[prop]{Corollary}
\theoremstyle{definition}
\newtheorem{rem}[prop]{Remark}
\numberwithin{equation}{section}
\title[Integrability of dominated decompositions]
{Integrability of  dominated decompositions on three-dimensional manifolds}
\author{Stefano Luzzatto, S\.ina T\"urel\.i,  Khadim War}
\date{27 March 2015}
\thanks{This paper has benefited from discussions and email exchanges with several people to whom we are very grateful. We mention in
particular Christian Bonatti, Andy Hammerlindl, Rafael Potrie, Jana Rodriguez-Hertz, Ali Tahzibi, Andrew Torok,  Raul Ures, Marcelo Viana,
Amy
Wilkinson. We would also like to thank the referee for useful suggestions regarding the presentation and some extensions of our results, in particular the statement about robustly transitive diffeomorphisms in Theorem \ref{thm:volpres}. }
\begin{document}

\begin{abstract}
We investigate the integrability of 2-dimensional invariant distributions (tangent sub-bundles) which arise naturally in the context of dynamical systems on 3-manifolds.  In particular we  prove unique integrability of dynamically dominated and volume dominated Lipschitz continuous invariant decompositions as well as distributions with some other regularity conditions.
\end{abstract}

\maketitle

\section{Introduction and Statement of Results}

 Let \(  M  \) be a smooth manifold and \(  E \subset TM  \) a distribution of tangent hyperplanes. A basic question concerns the (unique)
 integrability of the distribution \(  E  \), i.e. the existence at every point of a (unique) local embedded submanifold everywhere tangent
 to
 E. For one-dimensional distributions it follows from classical results on the existence and uniqueness of solutions of ODE's that regularity
 conditions suffice: existence is always guaranteed for continuous distributions and uniqueness for Lipschitz continuous distributions. For
 higher dimensional distributions the situation is more complicated and regularity conditions alone cannot guarantee integrability, indeed
 there exists arbitrarily smooth distributions which are not integrable \cite{Le}. It turns out however, that if the distributions are
 realized as \(  D\varphi  \)-invariant distributions for some diffeomorphisms \(  \varphi  \) then some conditions can be formulated which
 imply integrability. More precisely,
 let \( M \) be a Riemannian 3-manifold, \( \varphi: M \to M  \)
a \( C^2 \) diffeomorphism and \(  E \oplus F  \) a continuous \(  D\varphi  \)-invariant tangent bundle decomposition. For definiteness we
shall always assume, without loss of generality, that \(  dim (E) = 2  \) and \(  dim (F)=1  \). We state our results in the following
subsections.

\subsection{Dynamical domination and robust transitivity} 
A diffeomorphism \(  \varphi  \) is \emph{transitive} it there exists a dense orbit, and \emph{robustly transitive} if any \(  C^{1}  \) sufficiently close diffeomorphism is also transitive. 
A \(  D\varphi  \)-invariant decomposition \(  E \oplus F  \) is \emph{dynamically dominated} if there exists a Riemannian metric such that
\begin{equation}\label{dyndom}
\frac{\|D\varphi_{x} |_{E_{x}}\|}{\|D\varphi_{x}|_{F_{x}}\|}  < 1
\end{equation}
for all \(  x\in M  \). Then we have the following result.

\begin{thm}\label{thm:volpres}
Let \(  M  \) be a Riemannian 3-manifold, \( \varphi: M \to M\) a volume-preserving or robustly transitive \(  C^{2}  \) diffeomorphism and \(  E \oplus F  \) a \(
D
\varphi\)-invariant, Lipschitz,  dynamically dominated decomposition. Then E is uniquely integrable.
\end{thm}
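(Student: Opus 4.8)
The plan is to first strip away the two dynamical hypotheses and isolate the geometric core: \emph{plain domination together with Lipschitz regularity reduces the unique integrability of $E$ to the vanishing of a single obstruction tensor that transforms naturally under $\varphi$}. Since $F$ is a Lipschitz line field it is uniquely integrable by the Picard--Lindel\"of theorem, giving a one-dimensional foliation $\mathcal{W}^F$. I would fix local coordinates adapted to the splitting, straightening $\mathcal{W}^F$ to vertical lines and writing $E$ as the graph of a Lipschitz field of transverse planes. By Rademacher's theorem these data are differentiable almost everywhere, so one can define a.e. the Frobenius obstruction $\omega$, namely the $F$-component of $[X,Y]$ for a local Lipschitz frame $X,Y$ of $E$. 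This is a measurable section of $\Lambda^2 E^* \otimes F$, and $E$ is (a.e.-)involutive precisely when $\omega \equiv 0$.

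The central observation is that, because $D\varphi$ preserves both $E$ and $F$, the obstruction is \emph{natural}: it satisfies $\omega_{\varphi x} = A_x\,\omega_x$, where $A_x$ is the induced isomorphism on $\Lambda^2 E^* \otimes F$. A short computation gives $\|A_x\| = \|D\varphi_x|_{F_x}\| \,/\, \det(D\varphi_x|_{E_x})$, so that the domination hypothesis \eqref{dyndom} pushes $A$ toward \emph{expansion}. I would then argue that a nonzero $\omega$ would have to grow without bound along forward orbits, since $\omega_{\varphi^n x}$ is obtained by applying the product $A_{\varphi^{n-1}x}\cdots A_x$; this contradicts the a priori uniform bound on $\|\omega\|$ coming from the Lipschitz constant of $E$ and the compactness of $M$, forcing $\omega \equiv 0$ almost everywhere. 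A Lipschitz version of the Frobenius theorem then upgrades a.e.-involutivity to genuine integral surfaces tangent to $E$.

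It is exactly at the step ``domination $\Rightarrow$ the products of $\|A\|$ diverge'' that plain domination is insufficient, and where the two dynamical hypotheses enter. In the volume-preserving case I would use $|\det D\varphi|\equiv 1$, which (up to the angle between $E$ and $F$, bounded away from zero by transversality and compactness) gives $\det(D\varphi|_E) = \|D\varphi|_F\|^{-1}$ and hence $\|A_x\| = \|D\varphi_x|_{F_x}\|^2$; Poincar\'e recurrence then forces the products $\|D\varphi^n|_F\|^2$ to diverge on a full-measure set, spreading the vanishing of $\omega$ to almost every point. In the robustly transitive case I would instead invoke the structure theory of robustly transitive diffeomorphisms on $3$-manifolds: the extreme one-dimensional bundle $F$, being the most expanded, is uniformly expanding, which yields uniform control of the cocycle, while a dense orbit propagates the vanishing of $\omega$ to all of $M$.

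Finally, for uniqueness I would show that two integral manifolds of $E$ through a common point must coincide, again exploiting domination: their separation measured along $\mathcal{W}^F$ is contracted under iteration (equivalently, a genuinely distinct second surface would revive a nonzero holonomy defect, hence $\omega \neq 0$), so the two surfaces agree on a neighborhood. The main obstacle I anticipate is the low regularity throughout: $\omega$ is only defined almost everywhere, classical Lie brackets and the classical Frobenius theorem are unavailable, and making rigorous both the cocycle transformation of $\omega$ and the passage from a.e.-vanishing to genuine, unique integral surfaces—via a Lipschitz Frobenius statement—will require the bulk of the technical work.
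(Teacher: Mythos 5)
Your overall mechanism is the same as the paper's: the ``naturality'' identity $\omega_{\varphi x}=A_x\omega_x$ with $\|A_x\|=\|D\varphi_x|_{F_x}\|/|\det(D\varphi_x|_{E_x})|$ is exactly the content of the paper's Lemma on $\|\pi[Z,W]\|$ (obtained there from $\pi D\varphi^k[Z,W]=D\varphi^k\pi[Z,W]$ and the renormalized frame), the a priori bound on $\|\omega\|$ is the paper's constant $C_2$ coming from the $L^\infty$ bound on weak derivatives of a Lipschitz frame, and the ``Lipschitz Frobenius theorem'' you invoke at the end is Simi\'c's theorem \cite{S}, which already yields \emph{unique} integrability from a.e.\ involutivity -- so your separate uniqueness step is unnecessary (and, as sketched, goes the wrong way: under domination the separation measured along $F$ is \emph{expanded}, not contracted, by forward iteration). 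The paper packages both hypotheses into a single ``volume domination'' condition \eqref{voldom}, i.e.\ $\|A_x\|>1$ uniformly, and proves the general Theorem \ref{thm:dom}; you treat the two cases separately, and it is precisely there that the gaps occur.

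First, in the volume-preserving case, ``Poincar\'e recurrence forces $\prod\|A\|$ to diverge'' is not a valid step: recurrence says nothing about products of numbers that are not known to exceed $1$. What actually closes this step is that volume preservation \emph{plus} dynamical domination \eqref{dyndom} force $\|D\varphi_x|_{F_x}\|>1$ pointwise (if $\|D\varphi_x|_{F_x}\|\le 1$ then $|\det(D\varphi_x|_{E_x})|\ge1$, hence $\|D\varphi_x|_{E_x}\|\ge1\ge\|D\varphi_x|_{F_x}\|$, contradicting \eqref{dyndom}); by compactness $\|A_x\|=\|D\varphi_x|_{F_x}\|^2\ge\lambda^2>1$ uniformly and the products diverge everywhere. (An Oseledets/Birkhoff argument would also give a.e.\ divergence, but plain recurrence does not.) Second, and more seriously, in the robustly transitive case ``$F$ is uniformly expanding'' is insufficient even if granted: divergence of $\prod\|A\|$ requires the \emph{ratio} $\|D\varphi^n|_F\|/|\det(D\varphi^n|_E)|$ to blow up, and uniform expansion of $F$ says nothing about $|\det(D\varphi^n|_E)|$, which could grow faster. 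The ingredient you need from \cite{DiPuUr} is that a robustly transitive diffeomorphism of a $3$-manifold admits either a three-way dominated splitting with contracting and expanding extremal bundles, or a splitting $E\oplus F$ (for $\varphi$ or $\varphi^{-1}$) with $E$ \emph{volume contracting}, $|\det(D\varphi|_{E_x})|<1<\|D\varphi|_{F_x}\|$; each of these gives $\|A_x\|>1$ uniformly, i.e.\ volume domination. Finally, the remark that ``a dense orbit propagates the vanishing of $\omega$'' is vacuous for an object defined only almost everywhere; it is also not needed, since once $\|A^{(n)}_x\|\to\infty$ the boundedness of $\omega$ forces $\omega_x=0$ directly at every point of differentiability.
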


\begin{rem} Our dynamical domination condition is usually referred to in the literature simply as \emph{domination}, we use this non-standard
terminology to avoid confusion in view of the fact that we will introduce below another form of domination.
We remark also that the  dynamical domination condition is usually formulated with the co-norm \(  m(D\varphi_{x}|_{F_{x}}):=\min_{v\in F,
v\neq 0}\|D\varphi_{x}(v)\|/\|v\|  \) instead of \(  \|D\varphi_{x}|_{F_{x}}\|  \) but of course the two definitions are equivalent when \(
F
\) is one-dimensional, as here. We will also occasionally call $E$, the dominated bundle. 
\end{rem}

\begin{rem}
We mention that robustly transitive diffeomorphisms always admit a dynamically dominated decomposition \cite{DiPuUr}, so the main assumption in this case is that the decomposition is Lipschitz. 
\end{rem}

\subsection{Volume domination}
We will obtain Theorem \ref{thm:volpres} as a special case of the following more general result which replaces the volume preservation and the robust transitivity condition with a volume ``domination'' condition.
A \(  D\varphi  \)-invariant decomposition \(  E \oplus F  \) is \emph{volume dominated} if there exists a Riemannian metric such that 
\begin{equation}\label{voldom}
\frac{|det(D\varphi_{x} |_{E_{x}})|}{|det(D\varphi_{x}|_{F_{x}})|}  < 1
\end{equation}
for all \(  x\in M  \).  Then we have the following result.
 \begin{thm}\label{thm:dom}
Let \(  M  \) be a Riemannian 3-manifold,  \( \varphi: M \to M \) a \( C^2 \) diffeomorphism and \(  E \oplus F  \) a \(  D\varphi
\)-invariant, Lipschitz continuous, dynamically and volume dominated, decomposition. Then \( E \) is uniquely integrable.
 \end{thm}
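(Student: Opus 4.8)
The plan is to prove Theorem~\ref{thm:dom} by separating it into a measure-theoretic step and an integrability step: first I would show that $E$ is involutive at Lebesgue-almost every point, which I expect to come purely from the volume domination~\eqref{voldom}, and then upgrade almost-everywhere involutivity to genuine unique integrability, which is where I expect the dynamical domination~\eqref{dyndom} to be indispensable. Throughout I would work with the field of unit conormals $\omega$ annihilating $E$; since $E$ is Lipschitz continuous, $\omega$ is Lipschitz, so by Rademacher's theorem $d\omega$ exists almost everywhere and is bounded, and the scalar defect $\Phi$ defined by $\omega\wedge d\omega=\Phi\,\mathrm{vol}_g$ is a well-defined $L^{\infty}$ function. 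On a $3$-manifold a hyperplane field is involutive at a point exactly when this Frobenius defect vanishes there, so the target of the first step is $\Phi=0$ almost everywhere.

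For the first step I would compute how $\Phi$ transforms under $\varphi$. Invariance of $E$ forces $\varphi^{*}\omega=c\,\omega$ for a non-vanishing continuous function $c$, whence $\varphi^{*}(\omega\wedge d\omega)=c^{2}\,(\omega\wedge d\omega)$ almost everywhere. Pairing this with $\mathrm{vol}_g$ and using the elementary volume identities $|\det D\varphi|=|c|\,|\det(D\varphi|_{E})|$ and $|\det D\varphi|=|\det(D\varphi|_{E})|\,|\det(D\varphi|_{F})|\,\sin\angle(E_{\varphi x},F_{\varphi x})/\sin\angle(E_{x},F_{x})$, the angle terms telescope once I renormalise to $\Psi:=\Phi/\sin\angle(E,F)$, yielding
\[
\Psi\circ\varphi=\frac{|\det(D\varphi|_{F})|}{|\det(D\varphi|_{E})|}\,\Psi \qquad \text{almost everywhere.}
\]
Since $M$ is compact and $E\oplus F$ is a continuous transverse splitting, $\sin\angle(E,F)$ is bounded away from $0$, so $\Psi\in L^{\infty}$; and volume domination makes the factor continuous and strictly greater than $1$, hence uniformly at least some $\kappa>1$. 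Iterating gives $|\Psi(\varphi^{n}x)|\ge\kappa^{n}|\Psi(x)|$ for almost every $x$, which is incompatible with $\|\Psi\|_{\infty}<\infty$ unless $\Psi\equiv 0$. Hence $E$ is involutive almost everywhere.

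The hard part will be upgrading this to unique integrability, since for a merely Lipschitz distribution almost-everywhere involutivity does not by itself imply integrability; this is exactly where I would use dynamical domination. For uniqueness I would mimic the proof that dominated invariant manifolds are unique: given two germs of surfaces tangent to $E$ through a common point $p$, express one as a graph over the other with $F$-valued displacement $u$ satisfying $u(p)=0$ and $Du(p)=0$; since $\varphi$ preserves tangency to $E$, along the orbit the transverse displacement is amplified by $\|D\varphi|_{F}\|$ while the intrinsic $E$-scale is amplified only by $\|D\varphi|_{E}\|$, so the strict inequality $\|D\varphi|_{E}\|<\|D\varphi|_{F}\|$ of~\eqref{dyndom} forces the relative displacement to grow under backward iteration, contradicting tangency within a region of bounded geometry unless $u\equiv 0$. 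For existence I would feed the almost-everywhere involutivity into the invariant cone field around $E$ furnished by the domination, either running a graph-transform construction of a locally invariant plaque family or justifying a regularisation in which the Frobenius defects of smooth approximants vanish in the limit; I expect this existence step to be the principal obstacle, and it is presumably the content of the general Lipschitz integrability criterion developed in the earlier sections, which I would invoke at this point.

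Finally, Theorem~\ref{thm:volpres} would follow from Theorem~\ref{thm:dom} by the separate observation that, for volume-preserving or robustly transitive diffeomorphisms, the hypotheses force volume domination after replacing $\varphi$ by a suitable iterate and adapting the metric.
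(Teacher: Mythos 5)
Your first step is correct and is essentially the paper's key estimate (Proposition \ref{Prop:as}) rewritten in the dual language of the conormal form: the identity $\Psi\circ\varphi=\bigl(|\det(D\varphi|_{F})|/|\det(D\varphi|_{E})|\bigr)\,\Psi$, iterated and combined with $\Psi\in L^{\infty}$ and the fact that a $C^2$ diffeomorphism preserves null sets, is exactly the bound $\|[X,Y]_x\|\le C\,|\det(D\varphi^k_x|_{E_x})|/|\det(D\varphi^k_x|_{F_x})|$ that the paper derives with orthonormal frames and the projection $\pi$ onto $F$. So volume domination does yield involutivity of $E$ at almost every point, as you claim, and your form-theoretic packaging is a legitimate (arguably cleaner) alternative to the frame computations of Section \ref{Lie}.

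The gap is in your second step. You assert that for a merely Lipschitz distribution almost-everywhere involutivity ``does not by itself imply integrability'' and that dynamical domination is therefore indispensable for the upgrade. This is the opposite of how the argument closes: the whole point of reducing to a.e.\ involutivity is Simi\'c's Lipschitz version of the Frobenius theorem (quoted in Section \ref{sec-overview}), which states that a Lipschitz distribution admitting local Lipschitz frames with $[X_i,X_j]_x\in E_x$ for almost every $x$ is uniquely integrable --- no dynamics enters at this stage. Your proposed substitute, a graph-transform construction of the integral surfaces together with a domination-based uniqueness argument, is left unresolved (you yourself flag the existence step as ``the principal obstacle''), and the paper explicitly observes that graph-transform arguments are unavailable in this setting because $D\varphi|_{E}$ is dynamically unconstrained (it may be expanding), so there is no contraction to drive the iteration. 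As written, then, the proposal does not establish existence of integral manifolds; it becomes a complete proof only once the entire second step is replaced by an appeal to Simi\'c's theorem. (Your closing reduction of Theorem \ref{thm:volpres} to Theorem \ref{thm:dom} is correct in spirit; in the volume-preserving case no passage to an iterate is needed, since $|\det(D\varphi|_{E})|\,|\det(D\varphi|_{F})|=1$ together with \eqref{dyndom} gives \eqref{voldom} directly.)
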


Theorems \ref{thm:volpres} and  \ref{thm:dom} extend analogous statements in
\cite{LuTuWa} obtained using different arguments, in arbitrary dimension but under the  assumption that the decomposition is \(
C^{1}  \).
They also extend previous results of Burns and Wilkinson \cite{BurWil08}, Hammerlindl and Hertz-Hertz-Ures \cite{Ha, HHU07} and Parwani
\cite{Par} who prove analogous results\footnote{In some of the references mentioned, the relevant results are not always stated in the same form as given here but may be derived from related statements and the technical arguments. In some cases the setting considered is that of partially
hyperbolic diffeomorphisms with a tangent bundle decomposition of the form \(  E^{s}\oplus E^{c}\oplus E^{u}  \) where \(  E^{s}  \) is
uniformly contracting and \(  E^{u}  \) uniformly expanding. In this setting, one considers the integrability of the sub-bundles \(
E^{sc}=E^{s}\oplus E^{c}  \) and \(  E^{cu}=E^{c}\oplus E^{u}  \) and it is not always completely clear to what extent the existence of a
uniformly expanding sub-bundle is relevant to the arguments. We emphasize that the setting we consider here does not require the invariant
distribution \(  E  \) to contain any further invariant sub-bundle.
}
for respectively \(  C^{2}, C^{1}  \) and Lipschitz distributions under the assumption of
\emph{center-bunching} or \emph{2-partial hyperbolicity}:
 \begin{equation}\label{eq-bunched}
 \frac{\|D\varphi_{x}|_{E_{x}}\|^2}{\|D\varphi_{x}|_{F_{x}}\|}<1
 \end{equation}
for every \(  x\in M  \). In the 3-dimensional setting condition \eqref{eq-bunched}  clearly implies volume
domination and is therefore  more restrictive.  In Section \ref{dom-hyp} we sketch an example of a diffeomorphism and an invariant
distribution \(  E  \) which does not satisfy condition \eqref{eq-bunched} but does satisfy the dynamical domination and volume domination
assumptions we require in Theorem \ref{thm:dom}. This particular example is uniquely integrable by construction and so is not a ``new''
example, but helps to justify the observation that our conditions are indeed less restrictive than center-bunching \eqref{eq-bunched}.

The techniques we employ here are similar to those of Parwani but to relax the center-bunching condition one needs a more careful analysis of
the behaviour of certain Lie brackets, this  is carried out in Section \ref{Lie}.

A more sophisticated version of our arguments also yields
 an alternative sufficient condition for integrability which is related to
bundles which are Lipschitz along a transversal direction. Since the conditions are somewhat technical and it is not completely clear if they  are satisfied by any natural examples we have ``relegated'' the precise formulation and proof to the Appendix. We do think nevertheless that both the statement and the techniques used  are of some independent interest and  discuss this further below.

\begin{rem} The assumption that the diffeomorphisms in the Theorems above are \( C^{2} \)
is  necessary for the arguments we use in the proofs.
In the proof of Theorem \ref{thm:dom}, we need  to be able to compute the Lie brackets of iterates of certain sections from $E$ by $D\varphi$. For this reason $D\varphi$ needs to be $C^1$ to keep the regularity of a section along the orbit of a initial point $p$.
\end{rem}

 \subsection{Volume Domination versus 2 Partially Hyperbolic}\label{dom-hyp}
In this section we are going to sketch the construction of some non-trivial examples which satisfy the volume domination condition
\eqref{voldom} but not the center-bunching condition \eqref{eq-bunched}. This is a variation of the ``derived from Anosov'' construction due
to Ma\~n\'e \cite{Ma} (see \cite{VB} for the volume preserving case, which is what we use here). We are very grateful to Ra\'ul Ures for
suggesting and explaining this construction to us.
Consider the  matrix
$$
\left(
  \begin{array}{ccc}
    -3 & 0 & 2 \\
    1 & 2 & -3 \\
    0 & -1 & 1 \\
  \end{array}
\right)
$$
This matrix has determinant $1$ and has integer coefficients therefore induces a volume preserving toral automorphism on $\mathbb{T}^3$. It
is
Anosov since its eigenvalues are $r_1\sim -0.11, r_2\sim 3.11, r_3\sim-3.21$. Note that $r_1r_2/{r_3}<1$ but ${r_2^{2}}/{r_3}>1$. Hence
\eqref{voldom} is satisfied but \eqref{eq-bunched} is not.
Now  take a fixed point $p$ and a periodic point $q$ and a neighbourhood $U$ of $p$ so that forward iterates of $q$ never intersect $U$. One
can apply Man\'e's construction to perturb the map on $U$ as to obtain a new partially hyperbolic automorphism of $\mathbb{T}^3$ which is
still volume preserving. Such a perturbation is not a small one and therefore one can not claim integrability of the new system trivially by
using standard theorems as in \cite{HiPuSh}. Since the perturbation is performed on $U$ and orbit of $q$ never intersects $U$, the
perturbation does not change the  splitting and the contraction and expansion rates around $q$ and in particular \eqref{eq-bunched} is still
not satisfied on the orbit of $q$. Yet the new example is volume preserving therefore it is necessarily the case that \eqref{voldom} is
satisfied.

\section{Strategy and overview of the proof}\label{sec-overview}
We will first show that Theorem \ref{thm:volpres} is a special case of Theorem  \ref{thm:dom}. We will consider the volume preserving setting and the robustly transitive setting separately. We then discuss the proof of Theorem \ref{thm:dom}. 

\subsection{Volume preserving implies volume domination}
We show that when 
\(  \varphi  \) is volume preserving,   dynamical domination implies volume domination. Indeed, notice that
 \(  |det(D\varphi_{x}|_{F_{x}})| =  \|D\varphi_{x}|_{F_{x}}\|  \) since \(  F  \) is one-dimensional, so the difference
between dynamical domination and volume domination consists of the difference  between \(  \|D\varphi_{x} |_{E_{x}}\|  \) and \(
|det(D\varphi_{x} |_{E_{x}})|  \). These two quantities are in general essentially independent of each other; indeed considering the singular
value decomposition of \(  D\varphi_{x}|_{E_{x}}  \) and letting \(  s_{1}\leq s_{2}  \) denote the two singular values (since we assume \(
E
\) is 2-dimensional), we have that \(   \|D\varphi_{x} |_{E_{x}}\| = s_{2} \) and \(  |det(D\varphi_{x} |_{E_{x}})| = s_{1}s_{2} \). If \(
\|D\varphi_{x} |_{E_{x}}\| = s_{2}< 1 \) then we have a a straightforward inequality \(  |det(D\varphi_{x} |_{E_{x}})| = s_{1}s_{2} < s_{2} =
\|D\varphi_{x} |_{E_{x}}\| \) but this is of course not necessarily the case in general. However there is a relation in the   volume preserving  setting as this
implies
\( |det{D\varphi_x|_E}|\cdot |det D\varphi_x|_F|=1\) and so \eqref{dyndom} implies \(  |det D\varphi_x|_F|> 1  \) (arguing by contradiction,
\(  |det D\varphi_x|_F|=\|D\varphi_x|_F\| \leq 1  \) would imply  \(  |det D\varphi_{x}|_{E}|\geq 1  \) by the volume preservation, and this
would imply \(  \|D\varphi_{x}|_{E}\|/ \|D\varphi_{x}|_{F}\|\geq 1   \) which  would contradict \eqref{dyndom}). Dividing the equation  \(
|det{D\varphi_x|_E}|\cdot |det D\varphi_x|_F|\) \( =1\)  through by \(  (|det D\varphi_x|_F)^{2}|  \) we get \eqref{voldom}.

\subsection{Robust transitivity implies volume domination}
 It is shown in \cite{DiPuUr} that  if $\varphi$ is a $C^1$ robustly transitive diffeomorphism of a compact 3-manifold then it admits a dominated splitting such that at least on of the following two conditions hold: \emph{i)} \(  \varphi  \) admits a splitting of the form $E^s \oplus E^c \oplus E^u$ and there exists a Riemannian metric  
 such that \(  \|D\varphi|_{E^{s}_{x}} \|  <  \|D\varphi|_{E^{c}_{x}} \| <  \|D\varphi|_{E^{u}_{x}} \|  \) and \(   \|D\varphi|_{E^{s}_{x}} \| <1< \|D\varphi|_{E^{u}_{x}} \|   \); \emph{ii)}
either $\varphi$ or $\varphi^{-1}$ admits a splitting of the form $E \oplus F$ for which $dim(E)=2$ and \(  E  \) is volume contracting, i.e.  there exists a Riemannian metric such that 
$
|det(D\varphi|_{E_{x}})| < 1< \|D\varphi |_{F_{x}}\|
$
for all $x \in M$. Both of these conditions easily imply volume domination for $E$.

\subsection{Volume domination implies integrability}
From now on we concentrate on Theorem \ref{thm:dom} and reduce it to a key technical Proposition.
We fix  an arbitrary point \(  x_{0}\in M  \) and a local chart $(\mathcal U,x^1,x^2,x^3)$ centered at $x_0$.
We can assume (up to change of coordinates) that
${\partial}/{\partial x^i},i=1,2,3$ are transverse to $E$ and thus we can define linearly independent vector fields  ${X}$ and ${Y}$, which span $E$ and are of the form
$$
X=\frac{\partial}{\partial x^1}+a\frac{\partial}{\partial x^{3}}
 \quad \quad \quad
 Y=\frac{\partial}{\partial x^2}+b\frac{\partial}{\partial x^{3}}.
 $$
where $a$ and $b$ are Lipschitz functions. Notice that it follows from the form of the vector fields \(  X, Y  \) that at every point of differentiability the Lie bracket is well defined and lies in the \(  x^{3}  \) direction, i.e. 
\[
[X, Y] = c \frac{\partial}{\partial x^{3}}
\]
for some \(  L^{\infty}  \) function \(  c  \). 
In Section \ref{Lie} we will prove the following
\begin{prop}\label{Prop:as}
 There exists $C>0$ such that for every $k>1$ and $x\in\mathcal{U}$,
 if the distribution $E$ is differentiable at $x$ then we have
 $$\|[X,Y]_{x}\|\leq C 
 \frac{|det(D\varphi^k_x|_{E_x})|}{|det (D\varphi^k_x|_{F_x})|}.$$
\end{prop}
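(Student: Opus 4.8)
The plan is to exploit the way the (non-)involutivity of $E$ transforms under the dynamics, iterating a single-step identity $k$ times so that the determinant ratio emerges as the product of one-step ratios along the orbit. The starting observation is that the obstruction to involutivity is a genuine tensor: although $c$ is read off in the chart, the \emph{$F$-component} of $[X,Y]$ defines, at every point $p$ of differentiability of $E$, an alternating $2$-form $\Pi_p$ on the $2$-plane $E_p$ by $\Pi_p(u,v)=\langle n_p,[U,V]_p\rangle$, where $n_p$ is a unit normal to $E_p$ and $U,V$ are any local vector fields in $E$ extending $u,v$. A one-line computation using $\langle n_p, E_p\rangle=0$ and $[fU,V]=f[U,V]-(Vf)U$ shows this is tensorial (independent of the extensions), so $\Pi_p=\lambda(p)\,\mathrm{area}_{E_p}$ for a scalar $\lambda(p)$, and $\|[X,Y]_x\|$ is comparable to $|\lambda(x)|$ up to factors that are uniformly bounded above and below on the compact manifold (these factors measure the angle between $E$ and the coordinate direction $\partial/\partial x^3$ and the distortion between the coordinate frame $X,Y$ and an orthonormal frame of $E$).

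The heart of the argument is the one-step transformation law for $\lambda$. Using naturality of the Lie bracket, $\varphi_*[U,V]=[\varphi_*U,\varphi_*V]$, together with the $D\varphi$-invariance of $E$ (so that $D\varphi$ maps $E$-vectors to $E$-vectors, which are annihilated by the normal $n_{\varphi(p)}$), the only surviving contribution to $\Pi_{\varphi(p)}(D\varphi\,u, D\varphi\,v)$ comes from the $F$-component of $[U,V]_p$, and this component is stretched by $|\det(D\varphi_p|_{F_p})|$. Comparing the two sides through $\Pi=\lambda\,\mathrm{area}_E$ and the area-scaling relation $\mathrm{area}_{E_{\varphi(p)}}(D\varphi\,u,D\varphi\,v)=|\det(D\varphi_p|_{E_p})|\,\mathrm{area}_{E_p}(u,v)$ yields
$$
\lambda(p) \;=\; \lambda(\varphi(p))\,\frac{|\det(D\varphi_p|_{E_p})|}{|\det(D\varphi_p|_{F_p})|}\,\frac{\cos\theta_p}{\cos\theta_{\varphi(p)}},
$$
where $\cos\theta_p$ records the uniformly positive angle between $F_p$ and the normal $n_p$. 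Iterating this identity $k$ times, the one-step determinant ratios multiply, by the chain rule and invariance of the splitting, into $|\det(D\varphi^k_x|_{E_x})|/|\det(D\varphi^k_x|_{F_x})|$, while the angle factors telescope to the single bounded quotient $\cos\theta_x/\cos\theta_{\varphi^k(x)}$.

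It then remains to bound $\lambda(\varphi^k(x))$ uniformly, which uses two points. First, because $\varphi$ is $C^2$ and $E$ is $D\varphi$-invariant, differentiability of the section $E$ at $x$ propagates to differentiability at every iterate $\varphi^j(x)$ (the section at $\varphi^j(x)$ is the image of the section at $x$ under the $C^1$ bundle map $D\varphi^j$); this is precisely where the $C^2$ hypothesis enters, and it guarantees that $\lambda(\varphi^k(x))$ is defined. Second, since $E$ is Lipschitz, at \emph{every} point of differentiability the first derivatives of $E$, and hence $|\lambda|$, are bounded by a constant $C_0$ depending only on the Lipschitz constant, uniformly over the compact manifold. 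Combining these with the uniform bounds on the angle and frame-comparison factors gives $\|[X,Y]_x\|\le C\,|\det(D\varphi^k_x|_{E_x})|/|\det(D\varphi^k_x|_{F_x})|$ with $C$ independent of $k$ and of $x$.

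I expect the main obstacle to be not the single-step identity (a clean consequence of the naturality of the bracket) but the bookkeeping that isolates the determinant ratio as the only unbounded factor. One must verify that the tensorial quantity $\lambda$ genuinely scales by the \emph{area} factor $|\det(D\varphi|_E)|$ rather than by $\|D\varphi|_E\|^2$; this refinement is exactly what allows the center-bunching condition \eqref{eq-bunched} to be relaxed to the weaker volume-domination condition \eqref{voldom}. One must also check that the propagation of differentiability along the orbit, together with the uniform Lipschitz bound on $\lambda$, is legitimate, so that $\lambda(\varphi^k(x))$ is simultaneously defined and uniformly bounded for each fixed $k$.
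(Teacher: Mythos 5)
Your proposal is correct and follows essentially the same route as the paper: you isolate the $F$-component of the bracket as a tensorial quantity (your $\Pi=\lambda\,\mathrm{area}_{E}$ is the paper's Lemma \ref{lem:equal} on the frame-independence and uniform boundedness of $\pi[Z,W]$), use naturality of the Lie bracket together with invariance of the splitting to obtain the transformation law in which $E$-vectors are killed and the $F$-component scales by $\det(D\varphi|_F)$ while the area scales by $\det(D\varphi|_E)$, and then bound the transported obstruction at $\varphi^k(x)$ by the Lipschitz constant, with differentiability propagated along the orbit via the $C^2$ hypothesis. The only difference is packaging — you iterate a one-step cocycle identity where the paper performs the $k$-step computation directly with the normalized frame $\tilde Z,\tilde W$ — and this changes nothing of substance.
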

Substituting the volume domination condition \eqref{voldom} into the estimate in Proposition \ref{Prop:as}, we get that the right hand side converges to 0 as \( k\to \infty \), and therefore
\( \|[X,Y]_{x}\| = 0 \) and so  the distribution \( E \) is involutive at every point \( x \) at which it is differentiable. Theorem \ref{thm:dom} is then an immediate consequence of the
following  general result of  Simi\'c \cite{S} which holds in arbitrary dimension and  a generalization of a well-known classical result of Frobenius proving unique integrability for involutive \( C^{1} \) distributions.

\begin{thm*}[\cite{S}]\label{thm-Simic} Let $E$ be an $m$ dimensional Lipschitz distribution on a smooth manifold $M$. If for every point $x_0\in M$,
there exists a local neighbourhood $\mathcal U$ and a local Lipschitz frame $\{X_i\}_{i=1}^m$ of $E$ in $\mathcal U$ such that for almost
every point $x \in\mathcal U$,
$[X_i,X_j]_{x} \in E_{x}$, then $E$ is uniquely integrable.
\end{thm*}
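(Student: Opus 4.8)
The plan is to prove this Lipschitz Frobenius theorem by constructing integral manifolds explicitly as images of iterated flows, the key point being that almost-everywhere involutivity forces each of these flows to preserve the distribution. Since the statement is local, I would fix $x_0$, work in the given neighbourhood $\mathcal U$ with its Lipschitz frame $\{X_i\}_{i=1}^m$, and complete it to a continuous frame of $TM$ (with $n=\dim M$). Because each $X_i$ is Lipschitz, the Picard--Lindel\"of theorem produces a unique local flow $\psi^i_t$, which is bi-Lipschitz in the space variable, jointly Lipschitz in $(t,x)$, and, by Rademacher's theorem, differentiable almost everywhere. I would then set
\[
\Phi(t_1,\dots,t_m)=\psi^1_{t_1}\circ\cdots\circ\psi^m_{t_m}(x_0)
\]
on a small cube about the origin and propose $\mathcal W=\mathrm{image}(\Phi)$ as the integral manifold through $x_0$.

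The heart of the argument is an invariance lemma: a.e.\ involutivity implies that each flow $\psi^i_t$ preserves $E$, i.e.\ $D\psi^i_t(E_x)=E_{\psi^i_t(x)}$ for a.e.\ $x$ and all small $t$. To prove it, write $E=\bigcap_a\ker\omega^a$ locally with Lipschitz $1$-forms $\omega^1,\dots,\omega^{n-m}$; the a.e.\ involutivity is equivalent to the Frobenius condition $d\omega^a=\sum_b\gamma^a_b\wedge\omega^b$ a.e., for suitable $1$-forms $\gamma^a_b$. Since $\omega^b(X_i)=0$, Cartan's formula gives $\mathcal L_{X_i}\omega^a=\iota_{X_i}d\omega^a=\sum_b f^a_b\,\omega^b$ with bounded coefficients $f^a_b=\gamma^a_b(X_i)$. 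Setting $\beta^a(t)=\psi^i_t{}^*\omega^a$, the identity $\tfrac{d}{dt}\psi^i_t{}^*\omega^a=\psi^i_t{}^*\mathcal L_{X_i}\omega^a$ becomes the linear system
\[
\frac{d}{dt}\beta^a(t)=\sum_b\big(f^a_b\circ\psi^i_t\big)\,\beta^b(t).
\]
Restricting each $\beta^a(t)$ to $E_x$ leaves this system unchanged but with \emph{zero} initial data $\beta^a(0)|_{E_x}=\omega^a|_{E_x}=0$, so by uniqueness of solutions (Gronwall) $\beta^a(t)|_{E_x}\equiv 0$; this says precisely that $D\psi^i_t(E_x)\subseteq E_{\psi^i_t(x)}$. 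Note the non-circular structure: involutivity enters only through the Frobenius identity, which expresses $\mathcal L_{X_i}\omega^a$ as a combination of the $\omega^b$ and turns the problem into a linear ODE with vanishing initial condition.

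Granting the invariance lemma, I would verify that $\mathcal W$ is a genuine integral manifold. Its tangent vectors are $\partial_{t_j}\Phi=D(\psi^1_{t_1}\circ\cdots\circ\psi^{j-1}_{t_{j-1}})\big(X_j\big)$, and since $X_j\in E$ and each earlier flow preserves $E$, these lie in $E_{\Phi(t)}$ wherever they exist. A Jacobian estimate shows $\Phi$ is a bi-Lipschitz embedding, so $\mathcal W$ is locally a Lipschitz graph over an $m$-plane transverse to the complementary directions; the graphing function is Lipschitz with a.e.\ derivative prescribed by the continuous field $E$, and a Lipschitz function whose almost-everywhere derivative agrees with a continuous function is in fact $C^1$. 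Hence $\mathcal W$ is a $C^1$ submanifold everywhere tangent to $E$. Uniqueness is then immediate: any integral manifold $N$ through $x_0$ has the $X_i$ tangent to it, so by uniqueness of ODE solutions each $\psi^i_t$ maps $N$ into itself, forcing $\mathcal W\subseteq N$; as both have dimension $m$ they coincide near $x_0$.

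The main obstacle is carrying out the invariance lemma in the merely Lipschitz category. The coefficients $f^a_b$ and the Frobenius data $\gamma^a_b$ exist only almost everywhere, the flow $\psi^i_t$ is only bi-Lipschitz, and the identity $\tfrac{d}{dt}\psi^i_t{}^*\omega^a=\psi^i_t{}^*\mathcal L_{X_i}\omega^a$ must be justified distributionally rather than by the classical computation. The delicate step is a Fubini argument in flow-box coordinates for $X_i$: the null set on which involutivity (hence the Frobenius identity) fails must meet almost every flow line in a time-set of measure zero, and one must know that $t\mapsto f^a_b(\psi^i_t(x))$ is a legitimate $L^1$ coefficient so that the linear ODE and its uniqueness apply along almost every orbit. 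Here one exploits that flows of Lipschitz vector fields have bounded Jacobians and therefore send null sets to null sets, so the exceptional set does no harm. Transferring the hypothesis from ``almost everywhere in $M$'' to ``almost everywhere along almost every orbit'', and integrating the resulting a.e.\ identity, is the technical core that separates this Lipschitz statement from the classical $C^1$ Frobenius theorem.
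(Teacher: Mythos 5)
A preliminary remark: the paper does not prove this statement at all — it is imported as a black box from Simi\'c \cite{S} — so your attempt can only be measured against Simi\'c's published argument. Your overall strategy (flows $\psi^i_t$ of the Lipschitz frame, an invariance lemma for the distribution under these flows proved via annihilating $1$-forms, Cartan's formula and a Gronwall/linear-ODE argument, the composed-flow parametrization $\Phi$, and the upgrade ``Lipschitz graph with continuous a.e.\ derivative is $C^1$'') is the natural one and is close in spirit to \cite{S}; the $C^1$-upgrade lemma and the uniqueness argument (which uses only uniqueness of \emph{ambient} solutions of Lipschitz ODEs, not differentiability of the flows) are sound, and your Fubini discussion correctly handles the validity of the linear ODE with $L^\infty$ coefficients along almost every orbit.

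However, there is a genuine gap at the central step. Your invariance lemma is proved — and in this framework can only be proved — for \emph{almost every} $x$: the derivative $D\psi^i_t$ exists only off a Lebesgue-null set, and the Fubini argument selects a full-measure set of orbits along which the identity $\tfrac{d}{dt}\beta^a = \sum_b (f^a_b\circ\psi^i_t)\beta^b$ holds. But you then apply it along the sets $\psi^{j}_{t_j}\circ\cdots\circ\psi^{m}_{t_m}(x_0)$, which form an at most $m$-dimensional, hence null, subset of $M$ anchored at the \emph{prescribed} point $x_0$; nothing in your argument prevents this entire set from lying inside the exceptional null set, in which case neither the chain-rule formula $\partial_{t_j}\Phi = D\bigl(\psi^1_{t_1}\circ\cdots\circ\psi^{j-1}_{t_{j-1}}\bigr)X_j$ nor the inclusion $\partial_{t_j}\Phi\in E_{\Phi(t)}$ is justified at a single point of the parameter cube. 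The hedge ``wherever they exist'' does not help: Rademacher applied to the Lipschitz map $\Phi$ gives differentiability a.e.\ in the parameters, but gives no access to the invariance lemma at the corresponding image points, and unique integrability must be established through \emph{every} $x_0$. This a.e.-versus-null-slice tension, not the $L^1$-coefficient issue you flagged, is the real crux separating the Lipschitz statement from the $C^1$ Frobenius theorem. The standard repair is to thicken the construction: consider the full-dimensional bi-Lipschitz map $(t_1,\dots,t_m,s)\mapsto \psi^1_{t_1}\circ\cdots\circ\psi^m_{t_m}(\sigma(s))$ over an $(n-m)$-dimensional transversal $\sigma$, use Fubini together with the fact that Lipschitz flows preserve null sets to conclude that for a.e.\ base point the corresponding $m$-dimensional slice is Lipschitz with a.e.\ tangent in $E$, upgrade these to genuine $C^1$ integral surfaces by your graph lemma, and then reach \emph{every} point by an Arzel\`a--Ascoli limit of integral surfaces through a dense set of good base points, exploiting the continuity of $E$ and the uniform Lipschitz bounds; your uniqueness argument then applies verbatim. (Alternatively one can mollify the frame, as in \cite{S}.) Without some such device the proposal does not yield an integral manifold through an arbitrary given point.
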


\begin{rem}
We mention that there are some versions of Proposition \ref{Prop:as} in the literature for \( C^{1} \) distributions and giving an estimate of the the
$\|[X,Y]_{x}\|\leq  {|D\varphi^k_x|_{E_x}|^2}/{m(D\varphi_x|_{F_x})}$, , see e.g. \cite{HHU07, Par},.  For this quantity to go to zero, one
needs
the center bunching assumption \eqref{eq-bunched}. In our proposition, through more careful analysis, we relax the condition of center bunching
\eqref{eq-bunched} to
volume domination \eqref{voldom}.
\end{rem}


\section{Lie bracket bounds}\label{Lie}

This section is devoted to the proof of Proposition \ref{Prop:as}, which is now the only missing component in the proof of Theorems
\ref{thm:volpres} and \ref{thm:dom}.
As a first step in the proof, we reduce the problem to that of estimating the norm
of a certain  projection of the  bracket of an orthonormal frame. More specifically, let   \(  \pi  \) denote the orthogonal projection (with
respect to the
Lyapunov metric which orthogonalizes the bundles $E$ and $F$) onto \(  F \).

\begin{lem}\label{proj}
 There exists a constant \(  C_{1}>0  \) such that if
  \( \{Z, W\}\) is an orthonormal Lipschitz
  frame  for \(  E  \) and differentiable at $x\in\mathcal{U}$
 then we have
\[
\| [X, Y]_x\| \leq C_{1} \|\pi[Z,W]_x\|.
\]
\end{lem}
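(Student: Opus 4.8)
The plan is to relate the two frames $\{X,Y\}$ and $\{Z,W\}$ by a change of basis, use bilinearity of the Lie bracket together with the Leibniz rule, and then argue that the error terms not captured by the orthogonal projection $\pi$ can be absorbed into $E$ itself. Since both frames span the same $2$-plane $E_x$ at each point, there is a matrix-valued function $A$ with Lipschitz entries such that
\begin{equation*}
\begin{pmatrix} X \\ Y \end{pmatrix} = A \begin{pmatrix} Z \\ W \end{pmatrix},
\end{equation*}
and $A$ together with $A^{-1}$ is bounded uniformly on the (relatively compact) chart $\mathcal U$ because $\{X,Y\}$ is a genuine frame transverse to the coordinate planes and $\{Z,W\}$ is orthonormal. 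First I would write $[X,Y]$ in terms of $[Z,W]$ by expanding the bracket of the combinations $X = A_{11}Z + A_{12}W$ and $Y = A_{21}Z + A_{22}W$.

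Expanding via bilinearity and the Leibniz identity $[fU,gV] = fg[U,V] + f(Ug)V - g(Vf)U$ produces one term proportional to $(\det A)\,[Z,W]$ and a collection of remaining terms of the form $(\text{Lipschitz function})\cdot Z$ and $(\text{Lipschitz function})\cdot W$. The crucial observation is that these remaining terms all lie in $E$: they are linear combinations of $Z$ and $W$. Recall from the setup that $[X,Y]$ is purely in the $\partial/\partial x^3$ direction, hence its component inside $E$ must cancel; what survives is exactly the $F$-component. Applying the projection $\pi$ onto $F$ therefore kills the $Z,W$ error terms, leaving $\pi[X,Y]_x = (\det A)\,\pi[Z,W]_x$. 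But $[X,Y]_x = c\,\partial/\partial x^3$ already points transversally, so its norm is comparable to the norm of its $F$-projection up to a constant depending only on the fixed chart and the angle between the coordinate directions and $E,F$.

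Combining these two facts gives
\begin{equation*}
\|[X,Y]_x\| \leq C' \,\|\pi[X,Y]_x\| = C'\,|\det A|\,\|\pi[Z,W]_x\|,
\end{equation*}
and since $|\det A|$ is bounded above on $\mathcal U$ by a constant depending only on the uniform bounds for $A$, we obtain the claimed inequality with $C_1 = C'\sup_{\mathcal U}|\det A|$. The constant is uniform in $x$ and independent of $k$, as required.

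The main obstacle I anticipate is the careful bookkeeping at points of non-differentiability and the justification that the constant is genuinely uniform over $\mathcal U$ and over all valid orthonormal frames $\{Z,W\}$. Since $a,b$ and the entries of $A$ are only Lipschitz, the bracket is defined only almost everywhere; one must check that the Leibniz expansion is valid precisely at the points of common differentiability of both frames, and that the bound $\sup_{\mathcal U}|\det A|$ and the comparability constant $C'$ between $\|[X,Y]_x\|$ and $\|\pi[X,Y]_x\|$ depend only on the chart and the uniform transversality between $E,F$ and the coordinate hyperplanes, not on the point $x$ or the particular orthonormal frame chosen. Establishing this uniformity via compactness and the Lipschitz continuity of $E$ and $F$ is the technical heart of the lemma.
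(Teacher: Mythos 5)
Your proposal is correct and follows essentially the same route as the paper: both arguments reduce $\|[X,Y]_x\|$ to $\|\pi[X,Y]_x\|$ via the uniform transversality of $\partial/\partial x^3$ to $E$, then use bilinearity of the bracket and the fact that $\pi$ annihilates $Z$ and $W$ to obtain $\|\pi[X,Y]\|=|\det A|\,\|\pi[Z,W]\|$, with $\det A=\alpha_1\beta_2-\alpha_2\beta_1$ bounded uniformly by orthonormality of $\{Z,W\}$ and boundedness of $X,Y$. The only cosmetic difference is that you phrase the change of basis as a matrix $A$ and invoke the Leibniz rule explicitly, whereas the paper simply notes that all the extra terms lie in $E=\ker\pi$.
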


\begin{proof}
Notice that since $F$ and $\frac{\partial}{\partial x_3}$ are transverse to $E$, then one has that $K_1\leq ||\pi\frac{\partial}{\partial
x^3}||\leq K_2$ for some constants $K_1,K_2>0$. Moreover since
$\|\pi[X,Y]\|=|c|.\|\pi{\partial}/{\partial x^3}\|$ and $\|[X,Y]\|=  |c|$ then it is sufficient to get an upper bound for \(  \|\pi[X,Y]\|
\).
Writing \(  X, Y  \) in the local orthonormal frame \(  \{Z, W\}  \) we have
  $$
  X=\alpha_1Z+\alpha_2W \quad \text{ and } \quad
  Y=\beta_1Z+\beta_2W.
  $$
By bilinearity of the Lie bracket
and the fact that \(  \pi(Z)= \pi(W)=0  \) since \(  \pi  \) is a projection along \(  E  \),
  straightforward calculation gives
  \[
  \|\pi[X,Y]\|= |\alpha_1\beta_2-\alpha_2\beta_1|.\|\pi[Z,W]\|
  \]
By orthonormality of \( \{Z, W\}\),  we have   $|\alpha_i|\leq \|X\|,|\beta_i|\leq\|Y\|$ and since these are uniformly bounded, the same is
true for
  $ |\alpha_1\beta_2-\alpha_2\beta_1|$ and so we get the result.
 \end{proof}

By Lemma \ref{proj} it is sufficient to obtain an upper bound for the quantity
\(  \|\pi[Z,W]\|  \) for some Lipschitz orthonormal frame.
In particular we can (and do) choose Lipschitz orthonormal frames \(  \{Z, W\}  \) of \(  E  \)
 such that for every \(  x\in \mathcal U  \) and every \(  k\geq 1 \) we have
  $$
  \|D\varphi^k_x Z\|\|D\varphi^k_xW\|=|det(D\varphi^k|_{E})|.
  $$
  For these frames will we prove the following.

 \begin{lem}\label{Prop:Key1}
There exists $C_{2}>0$ such that for every \( k\geq 1\)  and $x\in\mathcal{U}$, if the distribution $E$ is differentiable
at $x$  we have
$$
\|\pi[Z,W]_{x}\|\leq C_{2} \frac{|det(D\varphi^k|_{E_x})|}{ ||D\varphi^k|_{F_x}||}.
$$
\end{lem}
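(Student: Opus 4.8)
The plan is to exploit the naturality of the Lie bracket under the diffeomorphism together with the invariance of the splitting $E\oplus F$, converting $\|\pi[Z,W]_x\|$ into a bracket evaluated at the iterate $\varphi^k(x)$, where the Lipschitz geometry of $E$ makes it uniformly bounded. First I would fix $x\in\mathcal U$ with $E$ differentiable at $x$, set $y=\varphi^k(x)$, and push the frame forward, writing $Z^{(k)}=\varphi^k_*Z$ and $W^{(k)}=\varphi^k_*W$. Because $\varphi$ is $C^2$, its differential is $C^1$ and the pushed-forward fields remain differentiable at $y$, so the naturality identity $[Z^{(k)},W^{(k)}]_{y}=D\varphi^k_x[Z,W]_x$ holds. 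Since $E\oplus F$ is $D\varphi$-invariant, $D\varphi^k_x$ preserves the splitting and hence commutes with the projection along $E$, i.e. $\pi_y\circ D\varphi^k_x=D\varphi^k_x\circ\pi_x$; as $F$ is one-dimensional, $D\varphi^k_x$ scales $F_x$ by exactly $\|D\varphi^k|_{F_x}\|$. Combining these gives
\[
\|\pi[Z^{(k)},W^{(k)}]_{y}\| \;=\; \|D\varphi^k|_{F_x}\|\cdot\|\pi[Z,W]_x\|,
\]
so it suffices to bound $\|\pi[Z^{(k)},W^{(k)}]_{y}\|$ by $C_2\,|\det(D\varphi^k|_{E_x})|$.

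Second, I would extract the size of the pushed-forward frame using the tensorial nature of the $F$-component of the bracket. For fields $U,V$ tangent to $E$ the Leibniz correction terms in $[fU,gV]$ are themselves multiples of $U$ and $V$, hence lie in $E$ and are annihilated by $\pi$; thus $(U,V)\mapsto\pi[U,V]$ is bilinear over functions, and
\[
\pi[Z^{(k)},W^{(k)}]_{y}=\|Z^{(k)}_y\|\,\|W^{(k)}_y\|\;\pi[\hat Z^{(k)},\hat W^{(k)}]_{y},
\]
where $\hat Z^{(k)},\hat W^{(k)}$ are the normalizations, an orthonormal frame of $E$ at $y$. This is exactly where the determinant, rather than the operator norm, enters: by the defining property of the chosen frame, $\|Z^{(k)}_y\|\,\|W^{(k)}_y\|=\|D\varphi^k_xZ\|\,\|D\varphi^k_xW\|=|\det(D\varphi^k|_{E_x})|$. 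Equivalently, $\pi[\cdot,\cdot]$ factors through $\Lambda^2E$, and the bivector $Z^{(k)}_y\wedge W^{(k)}_y$ has area $|\det(D\varphi^k|_{E_x})|$. Replacing the crude bound $\|Z^{(k)}\|\,\|W^{(k)}\|\le\|D\varphi^k|_{E_x}\|^2$ by this exact area is precisely the refinement that trades center-bunching for volume domination.

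The remaining and main task is the uniform estimate $\|\pi[\hat Z^{(k)},\hat W^{(k)}]_{y}\|\le C_2$, independent of $k$. The key observation is that this number is frame-independent: for any two orthonormal frames of $E$ differentiable at $y$, the tensoriality just used shows $\pi[\cdot,\cdot]$ depends only on the unit bivector, so $\|\pi[\hat Z^{(k)},\hat W^{(k)}]_{y}\|$ equals the norm of the intrinsic non-integrability tensor of $E$ at $y$. That tensor is controlled by one derivative of the distribution: written through the Levi-Civita connection of the Lyapunov metric, its $F$-component is a shape-operator expression $\langle\nabla N,\cdot\rangle$ with $N$ a unit normal field to $E$, and at any point where the Lipschitz field $N$ is differentiable one has $\|\nabla N\|\le\mathrm{Lip}(N)$. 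Compactness of $M$ makes $\mathrm{Lip}(N)$ and the metric constants uniform, producing $C_2$. The one technical point is that the tensor must be defined at $y$ in the first place: here one uses that $D\varphi$-invariance together with $\varphi\in C^2$ propagates differentiability of $E$ along orbits, so $E$ differentiable at $x$ forces $E$ (equivalently $N$) differentiable at $y=\varphi^k(x)$. Assembling the three steps yields $\|\pi[Z,W]_x\|=\|\pi[Z^{(k)},W^{(k)}]_y\|/\|D\varphi^k|_{F_x}\|\le C_2\,|\det(D\varphi^k|_{E_x})|/\|D\varphi^k|_{F_x}\|$.

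I expect the hardest part to be this last step, and in particular keeping the regularity bookkeeping honest while not letting the constant depend on $k$. The normalized frame $\hat Z^{(k)}$ has a Lipschitz constant that may blow up with $k$, so its bracket cannot be estimated directly; it is essential to route the estimate through frame-independence and the fixed geometry of $E$ at the single point $y$. The roles of the hypotheses then become transparent: Lipschitz regularity of $E$ supplies the a.e.\ derivative bound giving $C_2$, while the $C^2$ hypothesis on $\varphi$ is what lets us differentiate the pushed-forward sections and propagate differentiability along the orbit.
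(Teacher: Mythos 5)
Your proposal is correct and follows essentially the same route as the paper: push the frame forward by $D\varphi^k$, use naturality of the bracket and invariance of the splitting to extract the factor $\|D\varphi^k|_{F_x}\|$, use tensoriality of $\pi[\cdot,\cdot]$ together with the determinant-normalized frame to extract $|\det(D\varphi^k|_{E_x})|$, and bound the projected bracket of the normalized frame at $\varphi^k(x)$ by a frame-independent constant coming from the Lipschitz regularity of $E$. The only cosmetic difference is that the paper obtains the uniform constant $C_2$ via a finite cover by Lipschitz orthonormal local frames and a change-of-frame computation (its Lemma \ref{lem:equal}), whereas you phrase the same fact as a pointwise bound on an intrinsic non-integrability tensor by $\mathrm{Lip}(N)$.
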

Combining Lemma \ref{Prop:Key1} and Lemma \ref{proj} and letting \( C=C_{1}C_{2} \) we get:
$$
\|[X,Y]_x\| \leq C_{1}\|\pi[Z,W]_x\| \leq C_{1}C_{2} \frac{|det(D\varphi^k_x|_{E_{x}})|}{\|D\varphi^k_x|_{F_{x}}\|}= C \frac{|det(D\varphi^k_x|_{E_{x}})|}{|det(D\varphi^k_x|_{F_{x}})|}
$$
which is the desired bound in Proposition \ref{Prop:as} and therefore completes its proof.

To prove Lemma \ref{Prop:Key1},  observe first that for every $y\in M$  there exist $2$ orthonormal  Lipschitz  vector fields $A_y,B_y$ that span $E$
 in a neighborhood of $y$ and by compactness we can suppose that we have finitely many pairs, say
 $(A_1,B_1),...,(A_\ell,B_\ell)$ of such vector fields which together cover the whole manifold.
  We denote by $\mathcal{U}_i$ the domain
 where the vector fields $A_i,B_i$ are defined and let
 \[
 C_{2}:=\sup\{|\pi[A_i,B_i](x)|: 1\leq i\leq l \text{ and almost every }  x\in\mathcal U_i\}.
 \]
Note this constant $ C_{2}$ is finite. In fact, by the standard fact that Lipschitz functions have weak differential which is essentially bounded (
or $L^{\infty}$ ), then for every $i\in\{1,...,l\}$
the function $|[A_i,B_i]|$ is bounded. To complete the proof we will use the following observation.

\begin{lem}\label{lem:equal}
For any Lipschitz orthonormal local frame
$\{Z,W\}$   for $E$  which is differentiable at $x\in M$, we have
 $$|\pi[Z,W]|\leq  C_{2}$$
\end{lem}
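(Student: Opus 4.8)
The plan is to show that the magnitude of the projected bracket $\|\pi[Z,W]\|$ does not depend on which orthonormal frame of $E$ we use, and thereby reduce the supremum over \emph{all} orthonormal Lipschitz frames to the supremum over the finitely many reference frames $\{A_i,B_i\}$ that already defines $C_2$. Concretely, fix a point $x$ at which $\{Z,W\}$ is differentiable and choose an index $i$ with $x\in\mathcal U_i$, so that $\{A_i,B_i\}$ is a second orthonormal Lipschitz frame for $E$ defined near $x$. Since at each nearby $y$ both $\{Z,W\}$ and $\{A_i,B_i\}$ are orthonormal bases of the two–dimensional space $E_y$, there is a transition map $P$, Lipschitz in $y$ (its entries are the inner products $\langle Z,A_i\rangle$, etc., hence Lipschitz), with
$$
Z=\alpha_1 A_i+\alpha_2 B_i,\qquad W=\beta_1 A_i+\beta_2 B_i,
$$
where $P=\bigl(\begin{smallmatrix}\alpha_1&\alpha_2\\ \beta_1&\beta_2\end{smallmatrix}\bigr)$.

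The core of the argument is simply the algebraic computation already carried out in Lemma \ref{proj}, now run with $\{A_i,B_i\}$ as the base frame. Expanding $[Z,W]$ by bilinearity and the Leibniz rule, every term coming from differentiating the coefficients $\alpha_j,\beta_k$ is a multiple of $A_i$ or $B_i$, hence lies in $E$ and is killed by $\pi$; the only surviving contribution is the genuine bracket term, so that
$$
\pi[Z,W]=(\alpha_1\beta_2-\alpha_2\beta_1)\,\pi[A_i,B_i]=(\det P)\,\pi[A_i,B_i],
$$
exactly as in Lemma \ref{proj}, but using $\pi(A_i)=\pi(B_i)=0$ in place of $\pi(Z)=\pi(W)=0$. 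The one extra ingredient beyond Lemma \ref{proj} is that here $P$ is a change of basis between two \emph{orthonormal} frames, so $P(y)\in O(2)$ and therefore $|\det P(y)|=1$ (whereas in Lemma \ref{proj} the analogous determinant was merely bounded, since $X,Y$ were not orthonormal). Consequently $\|\pi[Z,W]_x\|=\|\pi[A_i,B_i]_x\|\leq C_2$ by the very definition of $C_2$, which is the claim.

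The main point to watch is a regularity bookkeeping issue: the Leibniz expansion above is legitimate, and the term $\pi[A_i,B_i]_x$ is defined, only when the reference frame $\{A_i,B_i\}$ and the transition coefficients are themselves differentiable at $x$, not merely $\{Z,W\}$. Since $\{A_i,B_i\}$ and $P$ are Lipschitz, Rademacher's theorem gives differentiability almost everywhere, so the identity $\|\pi[Z,W]\|=\|\pi[A_i,B_i]\|$ holds at almost every point of the (full measure) set where $\{Z,W\}$ is differentiable. Because $\pi[Z,W]$ is an $L^\infty$ object and the downstream use of the bound, through Proposition \ref{Prop:as} and the version of Frobenius' theorem of Simi\'c \cite{S}, only requires involutivity almost everywhere, this almost-everywhere bound is precisely what is needed; I would therefore phrase and apply the conclusion at almost every differentiability point rather than insist on each individual $x$ separately.
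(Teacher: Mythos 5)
Your proposal is correct and follows essentially the same route as the paper: expand $\{Z,W\}$ in the reference frame $\{A_i,B_i\}$, use bilinearity of the bracket and $\pi(A_i)=\pi(B_i)=0$ to get $\|\pi[Z,W]\|=|\det P|\,\|\pi[A_i,B_i]\|$, and conclude from $|\det P|=1$ for a transition between orthonormal frames. Your additional care about where the identity actually holds (almost everywhere, matching the essential-supremum definition of $C_2$ and the almost-everywhere involutivity needed for Simi\'c's theorem) is a sensible refinement of a point the paper passes over silently, not a different argument.
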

\begin{proof}
 Write
 \(
 Z=\alpha_1A_{i}+\alpha_2B_{i} \)
 and
 \(
 W=\beta_1A_{i}+\beta_2 B_{i}
 \)
 for some \(  1\leq i \leq \ell  \).
 Using the bilinearity of the Lie bracket and the fact that $\pi(A_{i})=\pi(B_{i})=0$ we get
 $
 |\pi[Z,W]|=|\alpha_1\beta_2-\alpha_2\beta_1|| \pi[A_{i},B_{i}]|.
 $
 Since \(  \{A_{i}, B_{i}\}  \) and $\{Z,W\}$ are both orthonormal frames,
we have $|\alpha_1\beta_2-\alpha_2\beta_1|=1$, and so we get result.
\end{proof}

\begin{proof}[Proof of lemma \ref{Prop:Key1}]
For $k>k_0$ and $x\in\mathcal{U}$ such that $E$ is differentiable at $x$, Let
  \[
  \tilde Z(\varphi^kx) = \frac{D\varphi^k_x Z}{\|D\varphi^k_xZ\|}
\quad \text{ and } \quad
  \tilde W(\varphi^kx) = \frac{D\varphi^k_xW}{\|D\varphi^k_x  W\|}
  \]
Recall that \( D\varphi^k_{x} (E_{x}) = E_{\varphi^{k}(x)}  \). Therefore, since
 $Z, W$  span $E$  in a neighborhood
 of $x$, then  $\tilde Z, \tilde W$ span \(  E  \) in
 a neighbourhood of \(  \varphi^{k}(x)  \) and in particular $\pi(\tilde Z)=\pi(\tilde W)=0$.
Therefore we get
 \begin{equation}\label{eq1}
 \|\pi[D\varphi^k Z,D\varphi^k W]\|=|det(D\varphi^k|_{E^{(k)}})|\|\pi[\tilde Z,\tilde W]\|
 \end{equation}
Note that $\|\pi[D\varphi^k Z,D\varphi^k W]\| = \|\pi D\varphi^k[Z,W]\|$. Then by the invariance of the bundles we have
 \begin{equation}\label{eq2}
  \|\pi D\varphi^k[Z, W]\|=\|D\varphi^k\pi[ Z, W]\|.
  \end{equation}
 Since $F$ is one dimensional,
 \begin{equation}\label{eq3}
 \|D\varphi^k\pi[ Z, W]\|= \|D\varphi^k|_{F}\|\|\pi[ Z, W]\|
 \end{equation}
Combining  \eqref{eq2} and \eqref{eq3} we get
$$
\|D\varphi^k|_{F}\|\|\pi[Z,W]\|=\|\pi D\varphi^k[ Z, W]\|.
$$
Putting this into equation \eqref{eq1} and using the fact that $\|\pi[\tilde Z,\tilde W]\|$ is uniformly bounded by lemma \ref{lem:equal} one
gets
$$
\|\pi[Z,W]\| \leq C_{2} \frac{|det(D\varphi^k|_{E})|}{\|D\varphi^k|_{F}\|}
$$
This concludes the proof of Lemma \ref{Prop:Key1}.
\end{proof}

\appendix

\section{Sequential transversal regularity}

A  counter example in \cite{HeHeUr2} shows that the Lipschitz regularity condition in our Theorems cannot be fully relaxed, without
additional
assumptions, in order to guarantee unique integrability. Nevertheless the kind of techniques we use lead naturally to the formulation of a
somewhat
unorthodox regularity condition, which we call ``sequentially transversal Lipschitz regularity''. The main reason that we choose to present this result is that the techniques used in the proof, especially those in Section \ref{sec-novel}, generalize naturally to yield continuous Frobenius-type theorems, such as those given in forthcoming papers \cite{SSK2, SK}; we also believe that there are some interesting questions to be pursued regarding the relation between Lipschitz regularity and transversal Lipschitz regularity, we discuss these in Section \ref{translip} below.  We will give a ``detailed sketch'' of the arguments concentrating mostly on techniques which are novel,  the full arguments can be found in a previous version of this paper \cite{LuzTurWar2}.

\subsection{Definition and statement of result}
As above, let  \(  M  \) be a 3-manifold,  \( \varphi: M \to M \) be a \( C^2 \) diffeomorphism, and  \( E\oplus F \) a continuous \(
D\varphi  \)-invariant tangent bundle decomposition with \( dim (E)=2\).
 We say that \( E \) is
 \textit{sequentially transversally Lipschitz} if there exists a $C^1$ line bundle $Z$, everywhere transverse to $E$,  and a \( C^1 \)
 distribution \( E^{(0)} \) such that the
sequence of \( C^1 \) distributions \(  \{E^{(k)}\}_{k>1}  \) given by
\begin{equation}\label{pullback}
E^{(k)}_x=D\varphi^{-k}_{\varphi^kx}  E^{(0)}_{\varphi^kx}, \forall x\in M, k>1
\end{equation}
 are equi-Lipschitz along  $Z$, i.e.  there
 exists $K>0$ such that for every $x,y\in M$ close enough and belonging to the same integral curve of $Z$, and every \(  k\geq 0  \),  we
 have
 $\measuredangle(E^{(k)}_x,E^{(k)}_y)\leq Kd(x,y)$.

 \begin{thm}\label{thm:coh-relax}
Let \(  M  \) be a Riemannian 3-manifold,  \( \varphi: M \to M \) a \( C^2 \) diffeomorphism and \(  E \oplus F  \) a \(  D\varphi
\)-invariant, sequentially transversally Lipschitz, dynamically and volume dominated, decomposition. Then \( E \) is uniquely integrable.
 \end{thm}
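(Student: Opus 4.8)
The plan is to adapt the dynamical approximation scheme that underlies Theorem \ref{thm:dom}, but to replace the direct appeal to Simi\'c's Lipschitz Frobenius theorem --- which is now unavailable, since $E$ is only assumed continuous --- by an explicit construction of integral manifolds as limits of approximate ones. The starting observation is that the pulled-back distributions $E^{(k)}$ defined in \eqref{pullback} are genuinely $C^1$, because $\varphi$ is $C^2$ and $E^{(0)}$ is $C^1$; hence for each fixed $k$ the Lie bracket of a $C^1$ frame of $E^{(k)}$ is classically defined at every point. Moreover, since $E$ is the dominated bundle, the backward graph transform contracts the $F$-slope of any $2$-plane field transverse to $F$ towards $E$, so $E^{(k)}\to E$ uniformly on $M$ as $k\to\infty$. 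The strategy is to integrate each $E^{(k)}$ \emph{approximately}, producing surfaces $\mathcal W^{(k)}$, and to show that these converge to a genuine integral manifold of $E$.

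First I would establish the \emph{asymptotic involutivity} of the family $\{E^{(k)}\}$. Choosing a $C^1$ frame $\{Z^{(k)},W^{(k)}\}$ of $E^{(k)}$ obtained by pulling back a fixed frame of $E^{(0)}$ under $D\varphi^{-k}$, the bracket transforms naturally as $[D\varphi^{-k}Z^{(0)},D\varphi^{-k}W^{(0)}]=D\varphi^{-k}[Z^{(0)},W^{(0)}]$, and repeating the projection computation of Section \ref{Lie} essentially verbatim yields a bound of the form
\[
\|\pi^{(k)}[Z^{(k)},W^{(k)}]\|\le C\,\frac{|det(D\varphi^k|_{E})|}{|det(D\varphi^k|_{F})|},
\]
where $\pi^{(k)}$ is the projection onto the transverse line along $E^{(k)}$. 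By the volume domination hypothesis \eqref{voldom} the right-hand side tends to $0$, so the non-involutivity of $E^{(k)}$ decays to zero uniformly. This is the exact analogue of Proposition \ref{Prop:as}, now applied to the smooth approximants rather than to $E$ itself.

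Next I would build the approximate integral manifolds. Fixing a base point $p$, I would take the two $C^1$ vector fields spanning $E^{(k)}$ and define $\mathcal W^{(k)}$ as the coordinate surface swept out by flowing first along one field and then along the other, $(s,t)\mapsto \Phi^{W^{(k)}}_t\circ\Phi^{Z^{(k)}}_s(p)$. Such a surface is tangent to $E^{(k)}$ along its generating curves, and its defect of tangency at interior points is controlled precisely by the transverse component of $[Z^{(k)},W^{(k)}]$ integrated along the flows --- hence by the asymptotically vanishing quantity above. The role of the sequentially transversal Lipschitz hypothesis is to control this construction in the direction of the $C^1$ transverse bundle $Z$: the equi-Lipschitz bound on the angles $\measuredangle(E^{(k)}_x,E^{(k)}_y)$ along integral curves of $Z$ provides a modulus of continuity for the surfaces $\mathcal W^{(k)}$ that is \emph{uniform in $k$} as they are translated transversally, which furnishes the compactness needed to extract a convergent subsequence and to identify the limit.

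The main obstacle is the passage to the limit: one must show that $\mathcal W^{(k)}$ converges, in a topology strong enough to preserve tangency, to a surface $\mathcal W$ that is everywhere tangent to $E$, and that the resulting object is unique. Here the two estimates must be combined quantitatively --- asymptotic involutivity bounds the tangency defect of each $\mathcal W^{(k)}$ relative to its own distribution $E^{(k)}$, while the transversal Lipschitz regularity bounds the transverse spreading and prevents the leaves from collapsing or oscillating as $k\to\infty$. Making this precise amounts to a ``continuous Frobenius'' argument in the spirit of \cite{SSK2}, in which asymptotic involutivity plays the role of genuine involutivity; this is the technically delicate heart of the proof, and I would carry it out by the methods of the cited companion work, with uniqueness following from the same transverse contraction that already forces $E^{(k)}\to E$.
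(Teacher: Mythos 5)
Your overall architecture for the existence part matches the paper's: pull back a $C^1$ distribution to get smooth approximants $E^{(k)}$, prove they are asymptotically involutive via the projection/determinant computation of Section \ref{Lie}, sweep out surfaces $\mathcal W^{(k)}$ by composing the flows of a frame of $E^{(k)}$, and pass to the limit by Arzela--Ascoli. Two points, however, are genuine gaps rather than omitted routine details.

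First, you misplace the role of the sequentially transversal Lipschitz hypothesis in the existence argument. You invoke it to get a ``modulus of continuity uniform in $k$'' for compactness, but the place where it is actually indispensable is in converting the smallness of $[X^{(k)},Y^{(k)}]$ into smallness of the tangency defect of $\mathcal W^{(k)}$. That defect is $(e^{tX^{(k)}})_*Y^{(k)}-Y^{(k)}$, whose $t$-derivative is $(e^{tX^{(k)}})_*[X^{(k)},Y^{(k)}]$; the bracket lies in the $\partial/\partial x^3$ direction, and one must show that the pushforward $(e^{tX^{(k)}})_*$ does not amplify vectors in that direction by an amount growing with $k$. The paper computes $\|(e^{tX^{(k)}})_*\partial/\partial x^3\|=\exp\int_0^t(\partial a^{(k)}/\partial x^3)\circ e^{-\tau X^{(k)}}\,d\tau$ (Proposition \ref{lem:pushbound}), and it is exactly the equi-Lipschitz bound \eqref{sec} on $\partial a^{(k)}/\partial x^3$ that keeps this factor bounded uniformly in $k$. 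Without this step the vanishing of the brackets proves nothing about the surfaces. Relatedly, your asymptotic involutivity bound is written with $\det(D\varphi^k|_{E})$ and $D\varphi^k|_{F}$ on the right-hand side, but the computation naturally produces $E^{(k)}$ and $F^{(k)}$ there; the comparison between the two (Proposition \ref{Prop:detap}) is a nontrivial cone/dominated-splitting estimate on $TM\wedge TM$, not a verbatim repetition of Section \ref{Lie}.

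Second, your uniqueness argument does not work. The ``transverse contraction that forces $E^{(k)}\to E$'' gives uniqueness of the limiting \emph{distribution}, not uniqueness of its integral manifolds: two distinct surfaces through a point, both tangent to $E$, are perfectly compatible with $E^{(k)}\to E$, and the example of \cite{HeHeUr2} shows that domination alone cannot rule this out, so some quantitative use of the regularity hypothesis is unavoidable here. The paper instead proves that the limit vector fields $X$ and $Y$ are themselves uniquely integrable via Hartman's theorem (Theorem \ref{thm-hartmann}), whose hypothesis --- approximating $1$-forms $\eta^k_i$ with uniformly bounded exterior derivatives --- is again supplied precisely by the uniform bound \eqref{sec}; uniqueness of the integral surface is then deduced by flowing along $X$ and $Y$ inside any two candidate surfaces through the same point.
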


\subsection{Relation Between Lipschitzness and Transversal Lipschitzness}
\label{translip}

Before starting the sketch of the proof of Theorem  \ref{thm:coh-relax} we discuss some general questions concerning the relationships between various forms of Lipschitz regularity.
We say that  a sub-bundle E is  \emph{transversally Lipschitz} if  there exists a $C^1$ line bundle $Z$, everywhere transverse to $E$, along which \( E \) is Lipschitz. The relations between Lipschitz, transversally Lipschitz, and sequentially transversally Lipschitz are not clear in general. For example it is easy to see that sequentially transversally Lipschitz implies transversally Lipschitz but we have not been able to show that transversally Lipschitz, or even Lipschitz, implies sequentially transversally Lipschitz.
Nevertheless  certain equivalence may exist under certain forms of dominations for bundles which occur as invariant bundles for diffeomorphisms.  We formulate the following question:
\begin{ques}
Suppose  $E \oplus F$ is a $D\varphi$-invariant decomposition satisfying  \eqref{eq-bunched}.  Then is \(  E  \) transversally Lipschitz if
and only if it is Lipschitz ?
\end{ques}

One reason why we believe this question is interesting is that that transversal Lipschitz regularity is a-priori strictly weaker than full Lipschitz regularity. Thus a positive answer to this question would imply that  transversal Lipschitzness of center-bunched dominated systems becomes in particular, by Theorem \ref{thm:dom}, a criterion for
their unique integrability. More generally, a positive answer to this question  would somehow be saying that  one only needs some domination condition and transversal regularity to prevent $E$ from demonstrating pathological behaviours such as non-integrability or non-Lipschitzness.

The notion of sequential transversal regularity and the result of Theorem~\ref{thm:coh-relax} may play a role in a potential solution to the question above. Indeed, if \( E \) is sequentially transversally Lipschitz and volume dominated, then by Theorem \ref{thm:coh-relax} it is uniquely integrable. Then,  under the additional assumption of centre-bunching,  by arguments derived from theory of normal hyperbolicity (see
\cite{HiPuSh}) it is possible to deduce that $E$ is Lipschitz along its foliation $\mathcal{F}$. We also know that there is a complementary
transversal foliation given by integral curves of $Z$ along which $E$ is sequentially Lipschitz and therefore Lipschitz. This implies that $E$ is Lipschitz.

Thus center-bunching and sequential transverse regularity implies Lipschitz.
The missing link would just be to show that if  $E$ is transversally Lipschitz along a direction then it is also sequentially transversally Lipschitz along that direction. This would yield a positive answer to the question.

\subsection{General philosophy and strategy of proof}\label{Strategy}
Since our distribution is no longer Lipschitz we are not able to apply any existing general involutivity/integrability result, such as that of Simi\'c quoted above\footnote{Some notion of Lie bracket can be formulated in lower regularity, see for example \cite[Proposition 3.1]{BuIv}, but it is not clear to us how to obtain a full unique  integrability result using these ideas.}. Instead we will have to essentially construct the required integral  manifolds more or less explicitly ``by hand''.

The standard approach for this kind of construction is the so-called \emph{graph transform} method, see \cite{HiPuSh}, which takes full advantage of certain hyperbolicity  conditions and consists of "pulling back" a sequence of manifolds and showing that the sequence of pull-backs converges to a geometric object which can be shown to be a unique integral manifold of the distribution.
This method goes back to Hadamard and has been used in many different settings but, generally, cannot be applied
in the partially hyperbolic or dominated decomposition setting where the dynamics is allowed to have a wide range of dynamical behaviour and it is therefore  impossible to apply any graph transform arguments to \( E^{sc} \) under our assumptions.  This is perhaps one of the main reasons why this setting has proved so difficult to deal with.

The strategy we use here can be seen as a combination of the Frobenius/Simi\'c involutivity approach and the Hadamard graph transform method. Rather than approximating the desired integral manifold by a sequence of manifolds we approximate  the continuous distribution \( E \) by a sequence \( \{E^{(k)}\} \) of \( C^1 \) distributions  obtained dynamically by "pulling back" a  suitably chosen initial distribution.
Since these  approximate distributions are \( C^1 \), the Lie brackets of \( C^1 \) vector fields in \( E^{(k)} \) can be defined. \emph{If the \( E^{(k)} \) were involutive}, then each one would admit an integral manifold \( \mathcal E^{(k)} \) and it is fairly easy to see that these converge to an integral manifold of the original distribution \( E \). However this is generally not the case and we need a more sophisticated argument to show that the distributions \( E^{(k)} \) are "asymptotically involutive" in a particular sense
which will be defined formally below. For each \( k \) we will construct an "approximate" local center-stable manifold \( \mathcal W^{(k)} \) which is not an integral manifold of \( E^{(k)} \)
(because the \( E^{(k)} \) are not necessarily involutive) but is "close" to being integral manifolds. Further estimates, using also the asymptotic involutivity of the distributions \( E^{(k)} \), then allow us to show that these manifolds converge to an integral manifold of the  distribution \( E \).
 We will then use a separate argument to obtain uniqueness, taking advantage of a result of Hartman.

\subsection{Almost involutive approximations}\label{sec-novel}
In this section we state and prove a generalization of Proposition \ref{Prop:as} which formalizes the meaning of ``almost'' involutive.
We consider the sequence of \( C^{1} \) distributions $\{E^{(k)}\}_{k>1}$ as
in the definition of sequential transverse regularity in \eqref{pullback}.
We fix a coordinate system $(x^{1},x^{2},x^{3},\mathcal{U})$ so that ${\partial}/{\partial x^i}$ are all transverse to $E$ and therefore to
$E^{(k)}$ for $k$ large enough since $E^{(k)} \rightarrow E$ uniformly in angle. Then thanks to this transversality assumption we can find
vector fields defined on $\mathcal U$ of the form
\begin{equation}\label{eq-vecfieldsk}
X^{(k)}=\frac{\partial}{\partial x^1}+a^{(k)}\frac{\partial}{\partial x^{3}}
 \quad \text{ and } \quad
 Y^{(k)}=\frac{\partial}{\partial x^2}+b^{(k)}\frac{\partial}{\partial x^{3}}.
\end{equation}
that span $E^{(k)}$ and converge to vector fields of the form
\begin{equation}\label{eq-vecfields}
X=\frac{\partial}{\partial x^1}+a\frac{\partial}{\partial x^{3}}
 \quad \text{ and } \quad
 Y=\frac{\partial}{\partial x^2}+b\frac{\partial}{\partial x^{3}}.
 \end{equation}
that span $E$ for $a^{(k)}$, \( a \), $b^{(k)}$, \( b \) everywhere non-vanishing functions. Moreover we can choose ${\partial}/{\partial x^3}$ to be the direction
where
sequential transversal Lipschitzness holds true (since it is already transversal to $E$) so we have the property that there exists $C>0$
\begin{equation}\label{sec}
 \left|\frac{\partial a^{(k)}}{\partial x^3}\right| <C \quad\text{ and }\quad \left|\frac{\partial b^{(k)}}{\partial x^3}\right| <C
\end{equation}
for all $k$. It is easy to check that $[X^{(k)},Y^{(k)}]$ lies in the \( \partial/\partial x^3 \) direction. As before we will have some
estimates about how fast the Lie brackets of these vector fields decay to $0$. For the following let $F^{(k)}$ be the continuous bundle which
is orthogonal to $E^{(k)}$ with respect to Lyapunov metric on $E$ so that $F^{(k)}$ goes to $F$ in angle (since $F$ is orthogonal to $E$ with
respect to the Lyapunov metric). We have the following analogue of Proposition \ref{Prop:as}
\begin{prop}\label{Prop:asap}
 There exists $C>0$ such that for every $k>1$ and $x\in\mathcal{U}$,
 we have
 $$\|[X^{(k)},Y^{(k)}](x)\|\leq C\frac{|det(D\varphi^k_x|_{E^{(k)}_x})|}{\|D\varphi^k_x|_{F^{(k)}_x}\|}.$$
\end{prop}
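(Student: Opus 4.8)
The plan is to mirror, step for step, the three-lemma architecture used for Proposition~\ref{Prop:as}, the only structural change being that the push-forward $D\varphi^k$ no longer maps $E^{(k)}$ back to an invariant bundle but instead into the single \emph{fixed} $C^1$ bundle $E^{(0)}$, since $D\varphi^k_x(E^{(k)}_x)=E^{(0)}_{\varphi^k x}$ by \eqref{pullback}. Write $\pi^{(k)}$ for the Lyapunov-orthogonal projection onto $F^{(k)}=(E^{(k)})^{\perp}$ and $\pi^{(0)}$ for the Lyapunov-orthogonal projection onto $F^{(0)}:=(E^{(0)})^{\perp}$. First I would establish the exact analogue of Lemma~\ref{proj}: because $E^{(k)}\to E$ uniformly in angle and $E$ is uniformly transverse both to $\partial/\partial x^3$ and to $F$, the quantity $\|\pi^{(k)}\partial/\partial x^3\|$ and the determinant of the change from $\{X^{(k)},Y^{(k)}\}$ to an orthonormal frame $\{Z^{(k)},W^{(k)}\}$ of $E^{(k)}$ are bounded above and below uniformly in $k$; using $\pi^{(k)}Z^{(k)}=\pi^{(k)}W^{(k)}=0$ this yields $\|[X^{(k)},Y^{(k)}]\|\le C_1\|\pi^{(k)}[Z^{(k)},W^{(k)}]\|$ with $C_1$ independent of $k$, reducing the problem to bounding $\|\pi^{(k)}[Z^{(k)},W^{(k)}]\|$.

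Next, for each $k$ I would choose $\{Z^{(k)},W^{(k)}\}$ adapted to the singular value decomposition of $D\varphi^k|_{E^{(k)}}$, so that $\|D\varphi^k Z^{(k)}\|\,\|D\varphi^k W^{(k)}\|=|\det(D\varphi^k|_{E^{(k)}})|$, and push the frame forward. The normalized images $\tilde Z,\tilde W$ form a $C^1$ orthonormal frame of the fixed bundle $E^{(0)}$ near $\varphi^k x$, and the analogue of Lemma~\ref{lem:equal} applied to $E^{(0)}$ provides a single constant $C_2^{(0)}$, finite because $E^{(0)}$ is $C^1$ on the compact $M$, with $\|\pi^{(0)}[\tilde Z,\tilde W]\|\le C_2^{(0)}$ for every $k$. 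Arguing as in \eqref{eq1} — the derivative terms produced when the frame is rescaled lie in $E^{(0)}=\ker\pi^{(0)}$ and are annihilated — gives $\|\pi^{(0)}[D\varphi^k Z^{(k)},D\varphi^k W^{(k)}]\|\le C_2^{(0)}|\det(D\varphi^k|_{E^{(k)}})|$. By naturality of the Lie bracket and $D\varphi^k(E^{(k)})=E^{(0)}=\ker\pi^{(0)}$, the left-hand side equals $\|\pi^{(0)}D\varphi^k P_F\|$, where $P_F:=\pi^{(k)}[Z^{(k)},W^{(k)}]\in F^{(k)}$.

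The crucial difference from Proposition~\ref{Prop:as} now appears, and this is where I expect the main obstacle. In the invariant setting $D\varphi^k$ preserved $F$, so $\pi^{(0)}$ acted trivially on $D\varphi^k P_F$ and one simply picked up the factor $\|D\varphi^k|_F\|$. Here, since $F^{(k)}$ is one-dimensional, $\|D\varphi^k P_F\|=\|P_F\|\,\|D\varphi^k|_{F^{(k)}}\|$, but to recover $\|P_F\|$ I must bound below $\|\pi^{(0)}\hat w\|=\sin\measuredangle(D\varphi^k F^{(k)},E^{(0)})$, where $\hat w$ spans $D\varphi^k F^{(k)}$. This transversality factor, trivially equal to $1$ in the original argument, is exactly where dynamical domination \eqref{dyndom} must be invoked. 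I would show that a unit $v\in F^{(k)}$, written in the invariant splitting as $v_E+v_F$ with $\|v_E\|/\|v_F\|$ uniformly bounded (because $F^{(k)}\to F$ and $F$ is uniformly transverse to $E$), obeys $\|D\varphi^k v_E\|/\|D\varphi^k v_F\|\le\lambda^k\|v_E\|/\|v_F\|$ with $\lambda<1$ the domination rate of \eqref{dyndom}; hence $D\varphi^k F^{(k)}$ converges in angle to $F$. Since $E^{(0)}$ is uniformly transverse to $F$ — precisely the condition under which the pullbacks $E^{(k)}$ converge to $E$ — it follows that $\measuredangle(D\varphi^k F^{(k)},E^{(0)})\ge\delta>0$ for all large $k$, so $\|\pi^{(0)}\hat w\|\ge\sin\delta>0$.

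Assembling the pieces gives $\|P_F\|\,\|D\varphi^k|_{F^{(k)}}\|\sin\delta\le C_2^{(0)}|\det(D\varphi^k|_{E^{(k)}})|$, whence
\[
\|\pi^{(k)}[Z^{(k)},W^{(k)}]\|\le \frac{C_2^{(0)}}{\sin\delta}\,\frac{|\det(D\varphi^k|_{E^{(k)}_x})|}{\|D\varphi^k|_{F^{(k)}_x}\|},
\]
and combining with the first step yields the Proposition with $C=C_1C_2^{(0)}/\sin\delta$. The finitely many small $k$ for which the uniform transversality estimates are not yet available cause no trouble: for each such fixed $k$ the left-hand side is bounded while the right-hand side is bounded below by a positive constant on the compact chart $\mathcal U$, so enlarging $C$ absorbs them. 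I note finally that, in contrast to the construction of the integral manifolds in the following sections, this estimate does not use the equi-Lipschitz hypothesis \eqref{sec}; it relies only on the $C^1$ regularity of each $E^{(k)}$ and of $E^{(0)}$ together with dynamical domination.
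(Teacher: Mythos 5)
Your proposal is correct and follows the same three-lemma architecture that the paper itself prescribes for Proposition \ref{Prop:asap}: the paper only gives a sketch, instructing the reader to repeat the proof of Lemma \ref{Prop:Key1} with the projection $\pi$ replaced by $\pi^{(k)}$ ``at relevant places''. The one place where you genuinely depart from, and improve on, that sketch is the analogue of step \eqref{eq2}. In the invariant setting the identity $\|\pi D\varphi^k[Z,W]\|=\|D\varphi^k\pi[Z,W]\|$ rests on $D\varphi^k$ preserving both $E$ and $F$; for the approximations this fails, since $D\varphi^k$ sends $E^{(k)}_x$ to $E^{(0)}_{\varphi^kx}$ but sends $F^{(k)}_x$ to a line that is not $F^{(0)}_{\varphi^kx}$. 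You correctly isolate the resulting loss as the factor $\|\pi^{(0)}\hat w\|=\sin\measuredangle(D\varphi^kF^{(k)},E^{(0)})$ and bound it below by combining dynamical domination (so that $D\varphi^kF^{(k)}$ converges in angle to the invariant bundle $F$) with the uniform transversality of $E^{(0)}$ to $F$, which is implicit in the paper's assertion that $E^{(k)}\to E$ in angle; the finitely many small $k$ are absorbed into the constant exactly as you say. This makes your write-up a complete argument where the paper's is only a sketch, at the cost of explicitly invoking the domination hypothesis \eqref{dyndom}, which the paper's sketch never mentions. The remaining ingredients --- the uniform-in-$k$ version of Lemma \ref{proj}, the frames adapted to the singular value decomposition, and the application of Lemma \ref{lem:equal} to the fixed $C^1$ bundle $E^{(0)}$ --- match the paper's intent exactly.
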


\begin{proof}[Sketch of proof]  The proof of Proposition \ref{Prop:asap} is very similiar to that of Proposition \ref{Prop:as} and  it is not hard to get the result with the difference that $E$ and $F$ in the right
hand side of the estimate in Proposition \ref{Prop:as} are replaced by $E^{(k)}$ and  $F^{(k)}$.
In this case we choose our collection of $C^1$ orthonormal collection of frames $\{Z^{(k)},W^{(k)}\}$ of $E^{(k)}$ so that
$
||D\varphi^kZ^{(k)}||||D\varphi^kW^{(k)}|| = det(D\varphi^k|_{E^{(k)}})
$
Then exactly as in lemma \ref{lem:equal}, to get an upper bound on $|[X^{(k)},Y^{(k)}]|$, it is enough to bound $[Z^{(k)},W^{(k)}]$. The proof of the inequality
$\|[Z^{(k)},W^{(k)}]\| \leq {det(D\varphi^k|_{E^{(k)}})}/{\|D\varphi^k|_{F^{(k)}}\|}$ follows quite closely the proof of lemma
\ref{Prop:Key1} where
the projection $\pi$ is replaced by $\pi^{(k)}$ which the projection to $F^{(k)}$ along $E^{(k)}$ at relevant places.
\end{proof}

The next, fairly intuitive but in fact quite technical, step is to replace the estimates on the approximations with estimates on the limit bundle.

\begin{prop}\label{Prop:detap}
There exists $C>0$ such that for every $k>1$ and $x \in \mathcal{U}$,
we have
\begin{equation}\label{eq-det}
|det(D\varphi^k_x|_{E^{(k)}_x})| \leq C |det(D\varphi^k_x|_{E_x})|
\end{equation}
and
\begin{equation}\label{eq-F}
\|D\varphi^k_x|_{F^{(k)}_x}\| \geq C \|D\varphi^k_x|_{F_x}\|
\end{equation}
\end{prop}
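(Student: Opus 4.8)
The plan is to exploit the fact that the distributions $E^{(k)}$ converge to $E$ uniformly in angle, together with the dynamical invariance built into the definition \eqref{pullback}, so that the restriction of $D\varphi^k$ to $E^{(k)}_x$ differs from its restriction to $E_x$ only by the action on nearby subspaces whose mutual angle is controlled. The key structural observation is that, by \eqref{pullback}, we have $D\varphi^k_x(E^{(k)}_x) = E^{(0)}_{\varphi^k x}$, so that $D\varphi^k|_{E^{(k)}_x}$ maps a $k$-dependent plane onto the \emph{fixed} $C^1$ plane field $E^{(0)}$ at $\varphi^k x$. The determinant and $F$-norm estimates should both follow from comparing the linear map $D\varphi^k_x$ on two subspaces, $E^{(k)}_x$ and $E_x$, whose angle $\measuredangle(E^{(k)}_x, E_x)$ is uniformly small (and in fact tends to $0$ as $k \to \infty$).

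First I would make precise the uniform angle convergence: since $E^{(k)} \to E$ uniformly in angle and $F^{(k)} \to F$ uniformly in angle on the compact manifold $M$, there are uniform constants controlling $\measuredangle(E^{(k)}_x, E_x)$ and $\measuredangle(F^{(k)}_x, F_x)$ independently of $x$ and $k$. For the determinant bound \eqref{eq-det}, I would decompose any unit area element on $E^{(k)}_x$ into its component lying in $E_x$ plus a transverse correction of size comparable to the angle between the planes; applying $D\varphi^k_x$ and using that the operator norm $\|D\varphi^k_x\|$ and conorm are comparable on a uniformly-spread two-plane gives $|\det(D\varphi^k_x|_{E^{(k)}_x})| \leq C|\det(D\varphi^k_x|_{E_x})|$. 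A cleaner route is to observe that a basis of $E^{(k)}_x$ can be written in terms of an orthonormal basis of $E_x$ via a linear map of the form $I + P$, where $P$ has norm bounded by a function of the angle, so that the Jacobian ratio is the determinant of a uniformly bounded matrix. For the $F$-norm bound \eqref{eq-F}, since $F$ and $F^{(k)}$ are one-dimensional and uniformly transverse to $E$, I would write a unit vector of $F^{(k)}_x$ as a unit vector of $F_x$ plus a contribution from $E_x$ of controlled size, apply $D\varphi^k_x$, and use the dynamical domination \eqref{dyndom} to ensure that the $E$-contribution does not dominate the $F$-contribution after iteration, yielding $\|D\varphi^k_x|_{F^{(k)}_x}\| \geq C\|D\varphi^k_x|_{F_x}\|$.

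The main obstacle I anticipate is \eqref{eq-F}: bounding the norm of $D\varphi^k$ on $F^{(k)}_x$ from \emph{below} by its norm on $F_x$ is delicate precisely because the $E$-component of the $F^{(k)}$-vector could, in principle, be expanded more by $D\varphi^k$ than the $F$-component is, destroying the lower bound. This is exactly where dynamical domination \eqref{dyndom} must enter to guarantee that the $F$-direction is the dominant expanding direction, so that the small $E$-admixture in $F^{(k)}$ remains negligible under all forward iterates. I would handle this by estimating $\|D\varphi^k_x v\|$ for $v \in F^{(k)}_x$ using invariance of the splitting and the fact that, after projecting along $E$, the $F$-component is preserved up to a factor controlled by the uniform transversality of $F^{(k)}$ to $E$; the domination condition then prevents cancellation. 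The determinant bound \eqref{eq-det} should by contrast be essentially a soft consequence of uniform angle closeness and should not require the dynamical hypothesis in any essential way.
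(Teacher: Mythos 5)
There is a genuine gap, and it is located precisely where you declare the estimate to be soft. For \eqref{eq-det}, uniform smallness of $\measuredangle(E^{(k)}_x,E_x)$ in the \emph{domain} does not by itself make the two determinants comparable. Passing to the exterior square, write a unit area element of $E^{(k)}_x$ as $a\,\omega+\eta$ with $\omega$ spanning $\Lambda^2E_x$ and $\eta\in E_x\wedge F_x$, $\|\eta\|\le\alpha_k|a|\|\omega\|$ with $\alpha_k$ small. Applying $\Lambda^2 D\varphi^k_x$, the main term grows like $|det(D\varphi^k_x|_{E_x})|$, but the ``transverse correction'' $\eta$ is expanded by $\Lambda^2 D\varphi^k_x|_{E\wedge F}$, whose norm is of order $\|D\varphi^k_x|_{E_x}\|\cdot\|D\varphi^k_x|_{F_x}\|$; under the domination hypotheses this is exponentially larger than $|det(D\varphi^k_x|_{E_x})|$, so the error term swamps the main term unless $\alpha_k$ decays at exactly the right exponential rate --- which uniform angle convergence does not give you. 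The paper closes this gap using the very fact you announce as your ``key structural observation'' and then never use: $D\varphi^k_x E^{(k)}_x=E^{(0)}_{\varphi^kx}$ lies in a cone of \emph{bounded} angle around $E_{\varphi^kx}$. Combined with the dominated splitting $\Lambda^2 E\oplus(E\wedge F)$ of $\Lambda^2TM$ induced by dynamical domination, the standard cone estimate for dominated splittings (a line in a cone around the dominated factor whose forward images stay in such a cone has growth comparable to that of the dominated factor) yields $\|\Lambda^2D\varphi^k_x(v\wedge u)\|\le C\,|det(D\varphi^k_x|_{E_x})|\,\|v\wedge u\|$. So \eqref{eq-det} is the hard half, it does use the dynamics essentially, and your proposed route (decomposition by domain angle alone, or the $I+P$ change of basis) fails.

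Conversely, \eqref{eq-F}, which you identify as the main obstacle and propose to attack with the domination condition \eqref{dyndom}, is the easy half and needs no domination at all: let $\pi$ be the projection onto $F$ along $E$, which is uniformly bounded and commutes with $D\varphi^k$ by invariance of the splitting. For a unit vector $v\in F^{(k)}_x$ one has $\|D\varphi^k_xv\|\ge\|\pi D\varphi^k_xv\|/\|\pi\|=\|D\varphi^k_x(\pi v)\|/\|\pi\|=\|D\varphi^k_x|_{F_x}\|\,\|\pi v\|/\|\pi\|$, and $\|\pi v\|$ is bounded below uniformly because $F^{(k)}\to F$ in angle. The $E$-admixture is annihilated exactly by $\pi$, so there is no cancellation to worry about. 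In short, your proposal inverts the difficulty of the two inequalities; the missing ingredient for the one that is actually hard is the forward-image cone condition together with the dominated splitting of the exterior square.
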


\begin{proof}[Sketch of proof]
\eqref{eq-F} is fairly easy since for any vector $v\notin E$,
$|D\varphi^kv| \geq  C D\varphi^k_x|_{F_x}|v|$ (since $F$ has dimension $1$). The real technical estimate is \eqref{eq-det}. One first needs
to make the observation that there exists a cone $C(\alpha)$ of angle $\alpha$ around $E$ such that $D\varphi^kE^{(k)} \subset C(\alpha)$.
This is the main observation that allows to relate two determinants independent of $k$. Indeed given a basis $v^k_1, u^k_1$ of $E^{(k)}$,
then
$K|det(D\varphi^k_x|_{E^{(k)}_x})|= |D\varphi^k_xv^k_1 \wedge  D\varphi^k_xu^k_1| = |\Lambda(D\varphi^k_x) v^k_1 \wedge u^k_1|$ where
$\Lambda(D\varphi^k_x)$ is the induced action of $D\varphi^k_x$ on $TM \wedge TM$. But then $\Lambda(D\varphi^k_x)$ allows a dominated
splitting of
$TM \wedge TM$ whose invariant spaces are $E_1 = E \wedge E$, $E_2 = E\wedge F$ where $E_1$ is dominated by $E_2$. We have that
for all $k$, $ E_1^{(k)} = \text{span}(v^k_1 \wedge u^k_1)$ is a space which is inside a cone $C(\alpha)$ around $E_1$. Therefore usual
dominated splitting estimates give that
$
|\Lambda(D\varphi^k_x)|_{E_1^{(k)}(x)}| \leq K |\Lambda(D\varphi^k_x)|_{E_1(x)}|
$
which proves the claim about determinants since $|\Lambda(D\varphi^k_x)|_{E_1(x)}| = |det(D\varphi^k_x|_{E_x})|$.
\end{proof}

\subsection{Almost Integral Manifolds}

We will  use the local frames \(\{X^{(k)}, Y^{(k)}\}  \) to define a family of local manifolds which we will then show converge to the
required integral manifold of \(  E  \). We emphasize that these are \emph{not} in general integral manifolds of the approximating distribution \( E^{(k)} \).
We will use the relatively standard notation  \( e^{tX^{(k)}} \) to denote the flow at time $t\in\mathbb{R}$ of the vector field \( X^{(k)}
\). Then we let
$$
\mathcal W^{(k)}_{x_0}(t,s):=e^{tX^{(k)}}\circ e^{sY^{(k)}}(x_0).
$$
This map is well defined for all sufficiently small \( s,t \) so that the composition of the corresponding flows remains in the local chart \(
\mathcal U \) in which the vector fields \( X^{(k)}, Y^{(k)} \) are defined. Since the vector fields \( X^{(k)}, Y^{(k)}\) are uniformly
bounded in norm,  choosing \( \epsilon \) sufficiently small  the functions \( \mathcal W^{(k)}_{x_0} \) can be defined in a fixed domain
$U_{\epsilon}=(-\epsilon,\epsilon)\times(-\epsilon,\epsilon)$ independent of \( k \) such that
\( \mathcal W^{(k)}_{x_0}(U_\epsilon)\subset \mathcal U.
 \)
 By a direct application of the  chain rule and the definition of \( \mathcal W^{(k)}_{x_0} \),
 for every
 $(t,s)\in U_{\epsilon}$ we have
\begin{equation*}
\tilde{X}^{(k)}(t,s)=\frac{\partial \mathcal W^{(k)}_{x_0}}{\partial t}(t,s)=X^{(k)}(\mathcal W^{(k)}_{x_0}(t,s))
\end{equation*}
and
\begin{equation*}
\tilde{Y}^{(k)}(t,s)=\frac{\partial \mathcal W^{(k)}_{x_0}}{\partial s}(t,s)=(e^{tX^{(k)}})_*Y^{(k)}(\mathcal W^{(k)}_{x_0}(t,s)).
\end{equation*}
where for two vector fields $V, Z$ and $t\in\mathbb{R}$, $(e^{tV})_*Z$ denotes that pushforward of $Z$ by the flow of $V$ defined by
$$
[(e^{tV})_*Z]_p = De^{tV}_{e^{-tV}(p)}Z_{e^{-tV}(p)}.
$$
The following lemma gives a condition for this family of maps to have a convergent subsequence whose limits becomes a surface
 tangent to $E$:

 \begin{lem}\label{lem:tangent}
  If $\tilde{X}^{(k)} \rightarrow X$ and $\tilde{Y}^{(k)} \rightarrow Y$ then the images of $\mathcal W^{(k)}_{x_0}$ are embedded submanifolds and this sequence of submanifolds has  a convergent subsequence whose limit is an integral manifold of $E$.
 \end{lem}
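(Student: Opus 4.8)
The plan is to exploit the special triangular form of the vector fields $X^{(k)}, Y^{(k)}$ to show that each $\mathcal W^{(k)}_{x_0}$ is a graph, deduce embeddedness essentially for free, and then extract a $C^1$-convergent subsequence whose limiting graph is tangent to $E$ at every point. First I would track the three coordinates of $\mathcal W^{(k)}_{x_0}(t,s) = e^{tX^{(k)}}\circ e^{sY^{(k)}}(x_0)$ through the two flows. Since $Y^{(k)} = \partial/\partial x^2 + b^{(k)}\,\partial/\partial x^3$ has vanishing $x^1$-component and unit $x^2$-component, flowing from $x_0=(0,0,0)$ for time $s$ lands at a point whose first coordinate is $0$ and whose second coordinate is $s$; then flowing by $X^{(k)} = \partial/\partial x^1 + a^{(k)}\,\partial/\partial x^3$, whose $x^1$-component is $1$ and whose $x^2$-component is $0$, produces first coordinate $t$ and leaves the second coordinate equal to $s$. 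Hence $\mathcal W^{(k)}_{x_0}(t,s) = (t, s, h^{(k)}(t,s))$ for a $C^1$ function $h^{(k)}$ on the fixed domain $U_\epsilon$. Being a graph over the $(x^1,x^2)$-plane, each image is automatically an injective $C^1$ immersion and an embedded submanifold, which settles the first assertion.

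Next I would obtain a convergent subsequence by Arzel\`a--Ascoli applied to $\{h^{(k)}\}$. Differentiating the graph identity gives $\partial_t \mathcal W^{(k)}_{x_0} = (1,0,\partial_t h^{(k)})$ and $\partial_s \mathcal W^{(k)}_{x_0} = (0,1,\partial_s h^{(k)})$, which by definition are exactly $\tilde X^{(k)}$ and $\tilde Y^{(k)}$; thus $\partial_t h^{(k)}$ and $\partial_s h^{(k)}$ are the $\partial/\partial x^3$-components of $\tilde X^{(k)}$ and $\tilde Y^{(k)}$. By hypothesis these converge, uniformly on $U_\epsilon$, to the corresponding components of $X$ and $Y$, so in particular $\{\partial_t h^{(k)}\}$ and $\{\partial_s h^{(k)}\}$ are uniformly bounded; together with the common base value $h^{(k)}(0,0)=0$ this makes $\{h^{(k)}\}$ uniformly bounded and equi-Lipschitz. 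Arzel\`a--Ascoli then yields a subsequence with $h^{(k_j)} \to h$ uniformly, and since the first partials converge uniformly the limit $h$ is $C^1$ with $\partial_t h,\ \partial_s h$ equal to those uniform limits; that is, $\mathcal W^{(k_j)}_{x_0} \to \mathcal W_{x_0} := (t,s,h(t,s))$ in $C^1$.

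Finally I would identify the limit as an integral manifold. The limiting map $\mathcal W_{x_0}$ is again a graph, hence an embedded $C^1$ submanifold, and its tangent plane at $\mathcal W_{x_0}(t,s)$ is spanned by $\partial_t \mathcal W_{x_0} = \lim_j \tilde X^{(k_j)}$ and $\partial_s \mathcal W_{x_0} = \lim_j \tilde Y^{(k_j)}$. By the hypothesis, and using $\mathcal W^{(k_j)}_{x_0} \to \mathcal W_{x_0}$ together with continuity of the limiting vector fields, these limits are $X$ and $Y$ evaluated along $\mathcal W_{x_0}$; since $X, Y$ span $E$, the tangent plane of the limit manifold equals $E_{\mathcal W_{x_0}(t,s)}$ at every point. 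Thus the limit is an integral manifold of $E$, as claimed.

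The main obstacle is the upgrade from $C^0$ to $C^1$ convergence: a mere uniform limit of the graphs would only be a $C^0$ surface and would carry no information about tangency to $E$. It is precisely the assumed convergence $\tilde Y^{(k)} \to Y$ that controls the first partials, and here lies the real subtlety, since $\tilde Y^{(k)} = (e^{tX^{(k)}})_*Y^{(k)}$ is the \emph{pushforward} and in general differs from $Y^{(k)}$ evaluated along $\mathcal W^{(k)}_{x_0}$ (this is exactly why the $\mathcal W^{(k)}_{x_0}$ are not integral manifolds of $E^{(k)}$). The content of the hypothesis is that the accumulated transverse twisting of this pushforward washes out in the limit. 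Establishing that hypothesis, rather than the present extraction argument, is where the bulk of the work of the construction will reside.
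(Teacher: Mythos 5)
Your proof is correct and shares the paper's overall skeleton --- Arzel\`a--Ascoli followed by identification of the partial derivatives of the limit map with $X$ and $Y$ --- but your implementation via the graph representation $\mathcal W^{(k)}_{x_0}(t,s)=(t,s,h^{(k)}(t,s))$ is a genuine and worthwhile refinement. The paper deduces embeddedness from linear independence of the partials (which strictly speaking only yields an immersion, and only for $k$ large) and asserts that a uniform bound on $D\mathcal W^{(k)}_{x_0}$ makes the family compact in the $C^{1}$ topology, which as stated only gives $C^{0}$ compactness. Your route closes both gaps: the triangular form of $X^{(k)},Y^{(k)}$ forces the first two coordinates of $\mathcal W^{(k)}_{x_0}(t,s)$ to be exactly $(t,s)$, so injectivity and embeddedness are automatic for every $k$; and the upgrade from uniform convergence of $h^{(k_j)}$ to $C^{1}$ convergence is obtained honestly from the hypothesis, since $\partial_t h^{(k)}$ and $\partial_s h^{(k)}$ are precisely the $\partial/\partial x^{3}$-components of $\tilde X^{(k)}$ and $\tilde Y^{(k)}$ and one can invoke the standard theorem on differentiating uniform limits. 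The one point worth making explicit is the reading of the hypothesis: $\tilde Y^{(k)}\to Y$ must mean $\tilde Y^{(k)}(t,s)-Y(\mathcal W^{(k)}_{x_0}(t,s))\to 0$ uniformly (this is what Proposition \ref{prop:Push} actually delivers), after which uniform continuity of $Y$ together with $\mathcal W^{(k_j)}_{x_0}\to\mathcal W_{x_0}$ gives $\partial_s h^{(k_j)}\to b\circ\mathcal W_{x_0}$ exactly as you use it. Your closing observation is also accurate: the substance of the construction lies in verifying the pushforward hypothesis $(e^{tX^{(k)}})_*Y^{(k)}\to Y$, not in this extraction argument.
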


\begin{proof}
Since $X$ and $Y$ are linearly independent by assumption of convergence, the differential of the map $\mathcal{W}^{(k)}_{x_0} $ is invertible
at every point \( (t,s) \in U_\epsilon \), i.e.  the partial derivatives
${\partial \mathcal W^{(k)}_{x_0}}/{\partial s} $ and \( {\partial \mathcal W^{(k)}_{x_0}}/{\partial t} \)
are linearly independent for every \( (t,s) \in U_\epsilon \).
Thus  the maps
\( \mathcal W^{(k)}_{x_0} \) are embeddings and  define
submanifolds through  \( x_0 \) (which are not in general integral manifolds of  \(  E^{(k)}  \)). Moreover, since \(
X^{(k)}, Y^{(k)} \) have uniformly bounded norms, it follows by Proposition \ref{prop:Push} that $D\mathcal W^{(k)}_{x_0}$ has bounded norm
uniformly in $k$ and therefore
the family \( \{\mathcal W^{(k)}_{x_0}\} \) is a compact family in the \( C^1 \) topology.
By the Arzela-Ascoli Theorem this family has a subsequence converging to some limit
 \begin{equation}
\mathcal W_{x_0}: U_\epsilon \to \mathcal U.
\end{equation}
 We claim that \( \mathcal W_{x_0} (U_\epsilon) \) is an integral manifold of \( E \).
Indeed, as \( k\to \infty \),  $X^{(k)} \rightarrow X$, $Y^{(k)} \rightarrow Y$  and  \( \{X, Y\} \) is a local frame
 of continuous vector fields for \( E \), in particular \( X, Y \) are linearly independent and span the distribution \( E \).
Moreover, by Proposition \ref{prop:Push}, the partial derivatives
${\partial \mathcal W^{(k)}_{x_0}}/{\partial t} $ and \( {\partial \mathcal W^{(k)}_{x_0}}/{\partial s} \)
are converging uniformly to $X$ and $Y$ and therefore
$$\frac{\partial \mathcal W_{x_0}}{\partial t}=X
\quad \text{ and }\quad
\frac{\partial \mathcal W_{x_0}}{\partial s}=Y.$$
This shows that \( \mathcal W_{x_0}(U_\epsilon) \) is a $C^1$ submanifold and its tangent space coincides with \( E \) and thus \( \mathcal
W_{x_0}(U_\epsilon) \) is an integral manifold of \( E \), thus proving integrability of \(  E  \) under these assumptions.
\end{proof}

It therefore just remains to verify the assumptions of Lemma \ref{lem:tangent}, i.e.
 to show that the vectors \( X^{(k)} \) and \( (e^{tX^{(k)}})_*Y^{(k)} \) converge to $X$ and $Y$. The first convergence is
obviously true. Thus it remains to show the latter which we show in the next result, thus completing the proof of the existence of integral manifolds.

\begin{prop}\label{prop:Push}
For all  $t\in (-\epsilon, \epsilon)$ we have
 $$\lim_{k\to\infty}\|(e^{tX^{(k)}})_*Y^{(k)}-Y^{(k)}\|=0.$$
\end{prop}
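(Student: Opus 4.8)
The plan is to exploit the standard identity relating the deviation of a pushed‑forward field from the original field to a time integral of the pushed‑forward Lie bracket. Writing the flow as $e^{sX^{(k)}}$ and using the time‑dependent Lie‑derivative formula $\frac{d}{ds}\big[(e^{sX^{(k)}})_*Y^{(k)}\big]=-(e^{sX^{(k)}})_*[X^{(k)},Y^{(k)}]$, integration over $[0,t]$ gives
\[
(e^{tX^{(k)}})_*Y^{(k)}-Y^{(k)}=-\int_0^t (e^{sX^{(k)}})_*[X^{(k)},Y^{(k)}]\,ds .
\]
Hence it suffices to bound the integrand, uniformly in $s\in(-\epsilon,\epsilon)$ and $k$, by a constant multiple of $\|[X^{(k)},Y^{(k)}]\|_\infty$, and then to let $k\to\infty$. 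The decay of $\|[X^{(k)},Y^{(k)}]\|_\infty$ is exactly what the earlier estimates provide: Proposition \ref{Prop:asap} and Proposition \ref{Prop:detap} together bound $\|[X^{(k)},Y^{(k)}](x)\|$ by a constant times $|det(D\varphi^k_x|_{E_x})|/\|D\varphi^k_x|_{F_x}\|=|det(D\varphi^k_x|_{E_x})|/|det(D\varphi^k_x|_{F_x})|$ (using $\dim F=1$), and volume domination \eqref{voldom} with compactness of $M$ yields a uniform bound $\leq\lambda^k$ with $\lambda<1$, so $\|[X^{(k)},Y^{(k)}]\|_\infty\to0$.

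The key structural observation I would use is that, as already noted in the construction of the frames, $[X^{(k)},Y^{(k)}]$ points in the transversal direction $\partial/\partial x^3$, say $[X^{(k)},Y^{(k)}]=c^{(k)}\,\partial/\partial x^3$. I then need to control the action of $(e^{sX^{(k)}})_*$ on a field parallel to $\partial/\partial x^3$. In the chart the flow of $X^{(k)}$ is $e^{sX^{(k)}}(x^1,x^2,x^3)=(x^1+s,\,x^2,\,\xi_s)$ with $\dot\xi_s=a^{(k)}(x^1+s,x^2,\xi_s)$; differentiating in the initial $x^3$ coordinate, the relevant entry $u_3=\partial\xi_s/\partial x^3$ of $D(e^{sX^{(k)}})$ solves the scalar linear ODE $\dot u_3=(\partial a^{(k)}/\partial x^3)\,u_3$, $u_3(0)=1$, so that $D(e^{sX^{(k)}})\big(\partial/\partial x^3\big)=u_3\,\partial/\partial x^3$ with $|u_3|\leq\exp\!\big(\int_0^{|s|}|\partial a^{(k)}/\partial x^3|\big)\leq e^{C\epsilon}$ by \eqref{sec}. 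Since $\|\partial/\partial x^3\|$ is bounded above and below on the compact chart, this gives $\|(e^{sX^{(k)}})_*[X^{(k)},Y^{(k)}]\|\leq C' e^{C\epsilon}\|[X^{(k)},Y^{(k)}]\|_\infty$, a bound that is crucially uniform in $k$. Feeding this into the integral formula yields $\|(e^{tX^{(k)}})_*Y^{(k)}-Y^{(k)}\|\leq |t|\,C'e^{C\epsilon}\,\|[X^{(k)},Y^{(k)}]\|_\infty$, which tends to $0$ as claimed.

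The main obstacle that this argument is designed to circumvent is that the full flow derivative $D(e^{sX^{(k)}})$ is \emph{not} uniformly bounded in $k$: its remaining entries are driven by $\partial a^{(k)}/\partial x^1$ and $\partial a^{(k)}/\partial x^2$, which the sequential transversal Lipschitz hypothesis leaves uncontrolled, only the transversal derivative $\partial a^{(k)}/\partial x^3$ being bounded via \eqref{sec}. The reason the proof nevertheless goes through is that we never push forward $Y^{(k)}$ itself through the uncontrolled part of the flow; we only push forward the \emph{Lie bracket}, which lives in the transversal direction $\partial/\partial x^3$, and the action of $D(e^{sX^{(k)}})$ on that single direction factors through the controlled entry $u_3$. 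Making this ``only the transversal component matters'' reduction precise — that is, verifying that the potentially blowing‑up entries of $D(e^{sX^{(k)}})$ genuinely do not enter the estimate of $(e^{sX^{(k)}})_*[X^{(k)},Y^{(k)}]$ — is the delicate point, and it is exactly where the definition of $\partial/\partial x^3$ as the direction of sequential transversal Lipschitzness is used.
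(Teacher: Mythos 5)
Your proposal is correct and follows essentially the same route as the paper: the identity $\frac{d}{ds}(e^{sX^{(k)}})_*Y^{(k)}=\pm(e^{sX^{(k)}})_*[X^{(k)},Y^{(k)}]$ (the paper's \eqref{eq-derivative}), the observation that $[X^{(k)},Y^{(k)}]$ lies in the $\partial/\partial x^3$ direction, and the uniform control of $(e^{sX^{(k)}})_*\partial/\partial x^3$ via \eqref{sec} — your scalar ODE for $u_3$ is exactly the explicit form of the paper's Proposition \ref{lem:pushbound}, and your integral formula replaces the paper's mean value theorem step. The sign discrepancy in the Lie-derivative formula is a convention issue and does not affect the norm estimate.
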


\begin{proof} To obtain this proposition one uses the following standard property of the pushforward (see  proof of Proposition 2.6 in \cite{AgSa} for instance):
\begin{equation}\label{eq-derivative}
\frac{d}{dt}[(e^{tX^{(k)}})_*Y^{(k)}-Y^{(k)}] = (e^{tX^{(k)}})_* [X^{(k)},Y^{(k)}]\end{equation}
together with the following proposition:

\begin{prop}\label{lem:pushbound}
There exists \(  C>0 \) such that for every $k\geq 1, x\in\mathcal{U}$  and  $|t|\leq \epsilon$, we have
$$||(e^{tX^{(k)}})_*\frac{\partial}{\partial x^{3}}|_x\|=\exp{\int_0^t\frac{\partial a^{(k)}}{\partial x^3}\circ
e^{-\tau X^{(k)}}(x)d\tau}$$
\end{prop}
This latter proposition follows by integrating the equality
$$
\frac{d}{dt}((e^{tX^{(k)}})_*\frac{\partial}{\partial x^3}|_x)=\left(e^{tX^{(k)}}\right)_*[X^{(k)},\frac{\partial}{\partial x^3}]|_x
$$
Once this is established since we know that ${\partial a^{(k)}}/{\partial x^3}$ is uniformly bounded \eqref{sec}, we obtain that the
effect of $(e^{tX^{(k)}})_*$ on ${\partial}/{\partial x^{3}}$ is bounded. Since $[X^{(k)},Y^{(k)}]$ is a vector in this direction whose
norm goes to $0$ we directly obtain by equation \eqref{eq-derivative} that $\frac{d}{dt}[(e^{tX^{(k)}})_*Y^{(k)}-Y^{(k)}]$ goes to $0$
uniformly and hence
by mean value theorem that $|(e^{tX^{(k)}})_*Y^{(k)}-Y^{(k)}|$ goes to $0$ which is the proposition.
\end{proof}

\begin{rem}
From the proof, it is seen that the most crucial  ingredient is for the approximations  to satisfy the pushforward bound in
Proposition \ref{prop:Push}. One can generalize this observation to get geometric theorems about integrability of continuous sub-bundles, not
just those arising in dynamical systems, with some additional assumptions such as the Lie brackets going to $0$. This idea, which originated in this paper, is employed in forthcoming works \cite{SSK2, SK}.
\end{rem}

\subsection{Uniqueness}
To get uniqueness of the integral manifolds we will take advantage of a general result of Hartman which we state in a simplified version which is sufficient for our purposes.

\begin{thm}[\cite{H}, Chapter 5, Theorem 8.1]\label{thm-hartmann}
Let $X= \sum_{i=1}^n X^i(t,p)\frac{\partial}{\partial x^{i}}$ be a continuous vector field defined on $I \times U$ where $U\subset \mathbb{R}^n$ and $I \subset \mathbb{R}$.
Let $\eta_i = X^i(t,p)dt - dx^{i}$.  If there exists a sequence of $C^1$ differential forms $\eta^k_i$ such that $|\eta^k_i - \eta_i|_{\infty}
\rightarrow 0$ and $d\eta^k_i$ are uniformly bounded then $X$ is uniquely integrable on $I \times U$. Moreover on compact subset of $U\times I$ the integral curves are uniformly Lipschitz continuous with respect to the initial conditions.

\end{thm}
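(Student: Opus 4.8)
The plan is to take existence for granted from Peano's theorem (the field $X$ is continuous) and to concentrate entirely on uniqueness together with the Lipschitz dependence, both of which will follow from a single Gronwall estimate. The key observation is that the forms $\eta_i = X^i(t,p)\,dt - dx^i$ are precisely the contact forms of the non-autonomous system $\dot x^i = X^i(t,x)$: a $C^1$ curve $t\mapsto (t,x(t))$ is an integral curve of $X$ if and only if each $\eta_i$ pulls back to zero on it, so that $\int_\gamma \eta_i = 0$ along every solution $\gamma$. My idea is to compare two solutions by integrating the \emph{approximating} forms $\eta_i^k$ over a surface interpolating between them and to control the result through the uniform bound on $d\eta_i^k$.

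Concretely, let $\gamma_1(t)=(t,x(t))$ and $\gamma_2(t)=(t,y(t))$, $t\in[t_0,\tau]$, be two solutions, and set $z(t)=y(t)-x(t)$. I would form the straight-line homotopy
\[
H(t,u) = \bigl(t,\ (1-u)x(t)+u\,y(t)\bigr), \qquad (t,u)\in[t_0,\tau]\times[0,1],
\]
which, for $x,y$ close and $U$ locally convex, is a $C^1$ 2-chain $\Sigma$ in $I\times U$ whose boundary consists of the two solution curves, the terminal segment $\sigma_\tau$ at time $\tau$ joining $(\tau,x(\tau))$ to $(\tau,y(\tau))$, and the initial segment $\sigma_{t_0}$. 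Applying Stokes' theorem to the $C^1$ form $\eta_i^k$ gives $\int_\Sigma d\eta_i^k = \int_{\partial\Sigma}\eta_i^k$. On each solution curve $\int_{\gamma_j}\eta_i^k = \int_{\gamma_j}\eta_i + O\!\left(|\eta_i^k-\eta_i|_\infty\right)$, which vanishes as $k\to\infty$; on the segments, where $dt=0$, one computes directly that $\int_{\sigma_\tau}\eta_i = -\,z^i(\tau)$ and $\int_{\sigma_{t_0}}\eta_i = -\,z^i(t_0)$, again up to the same uniformly small error. Thus in the limit $k\to\infty$ the boundary integral reduces (up to orientation bookkeeping) to $z^i(t_0)-z^i(\tau)$.

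For the interior integral I would use that $\partial_u H = (0,z(t))$ has norm $|z(t)|$ while $\partial_t H$ is bounded, since the speeds $\dot x,\dot y$ equal values of $X$. Hence the pullback satisfies $\bigl|H^*(d\eta_i^k)\bigr| \le |d\eta_i^k|_\infty\,|\partial_t H|\,|\partial_u H| \le L\,|z(t)|$ with $L$ independent of $k$ by the uniform bound on $d\eta_i^k$, so that $\bigl|\int_\Sigma d\eta_i^k\bigr| \le L\int_{t_0}^\tau |z(t)|\,dt$. Combining the two computations and summing over $i$ yields the integral inequality
\[
|z(\tau)| \ \le\ |z(t_0)| + L'\int_{t_0}^{\tau}|z(t)|\,dt .
\]
Gronwall's inequality then gives $|z(\tau)|\le |z(t_0)|\,e^{L'(\tau-t_0)}$: taking $z(t_0)=0$ proves uniqueness, while the general estimate is exactly the asserted uniform Lipschitz dependence on initial conditions over compact subsets.

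The main obstacle I anticipate is not the Gronwall step but the careful justification of the Stokes computation: one must check that the interpolating surface stays inside the domain, which forces the argument to be local and to exploit convexity of small coordinate balls; that Stokes' theorem genuinely applies to the merely $C^1$ chain $\Sigma$ and the $C^1$ forms $\eta_i^k$; and above all that the orientations and the endpoint segments are accounted for so that the limiting boundary term is \emph{exactly} $z^i(t_0)-z^i(\tau)$ and not some uncontrolled quantity. Extracting the crucial factor $|z(t)|$ from the surface integral, the mechanism that converts the bounded-$d\eta_i^k$ hypothesis into a Gronwall kernel, is the technical heart of the argument.
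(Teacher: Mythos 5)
The paper gives no proof of this statement: it is imported verbatim from Hartman's book \cite{H} (Chapter 5, Theorem 8.1) and used as a black box. Your argument is essentially Hartman's own proof of that theorem --- Stokes/Green applied to the approximating forms $\eta_i^k$ over the ruled surface between two solution arcs, with the uniform bound on $d\eta_i^k$ producing the Gronwall kernel $L\int|z|$ --- and it is correct; the boundary bookkeeping does give exactly $z^i(t_0)-z^i(\tau)$ in the limit, and the resulting estimate $|z(\tau)|\le|z(t_0)|e^{L'|\tau-t_0|}$ yields both uniqueness and the asserted Lipschitz dependence. The one technical point you rightly flag, applying Stokes to the merely $C^1$ chain $H$ against a $C^1$ form, is standard: mollify the solution curves $x,y$ in $C^1$, apply Stokes to the smooth chains, and pass to the limit using continuity of $d\eta_i^k$ and uniform $C^1$ convergence of the mollifications; no new idea is needed.
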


We recall that a  two form being uniformly bounded is equivalent to each of its component is being uniformly bounded.

\begin{cor}
Vector fields $X$ and $Y$ defined in \eqref{eq-vecfields} are uniquely integrable.
\end{cor}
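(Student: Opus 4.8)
The plan is to deduce the unique integrability of the two-dimensional distribution $E=\operatorname{span}(X,Y)$ from the unique integrability of the individual vector fields $X$ and $Y$, each of which I will obtain by applying Hartman's Theorem~\ref{thm-hartmann}. The reduction runs as follows. Since $E$ is continuous, any integral manifold $N$ of $E$ through a point $x_0$ is a $C^1$ surface to which both $X$ and $Y$ are everywhere tangent. If $X$ and $Y$ have unique integral curves, then the integral curve of $X$ through any point of $N$ must remain in $N$: the curve obtained by integrating $X$ \emph{within} $N$ is an ambient integral curve, hence \emph{the} ambient integral curve by uniqueness, and likewise for $Y$. Consequently $N$ is forced to contain the curve $s\mapsto e^{sY}(x_0)$ together with the $X$-flow issuing from it, so it coincides near $x_0$ with the manifold $\mathcal W_{x_0}$ already produced in the existence part. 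This yields uniqueness, so it suffices to prove that $X$ and $Y$ are uniquely integrable as vector fields.

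First I would treat $X=\partial/\partial x^1+a\,\partial/\partial x^3$. Its integral curves preserve the planes $\{x^2=c\}$, so I freeze $x^2=c$ and work in the two variables $(x^1,x^3)$, viewing $x^1$ as time. In the notation of Theorem~\ref{thm-hartmann} this is a field with a single spatial variable $x^3$, and the relevant one-form is $\eta=a\,dx^1-dx^3$; the natural $C^1$ approximations are $\eta^{(k)}=a^{(k)}\,dx^1-dx^3$, which are indeed $C^1$ because the distributions $E^{(k)}$ are $C^1$. Since $a^{(k)}\to a$ uniformly we have $|\eta^{(k)}-\eta|_\infty\to 0$. The key computation is that, on the frozen plane, $d\eta^{(k)}=\tfrac{\partial a^{(k)}}{\partial x^3}\,dx^3\wedge dx^1$, because the $\partial/\partial x^1$ term of $da^{(k)}$ is annihilated by the wedge with $dx^1$ and the $\partial/\partial x^2$ term is absent after freezing. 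The single surviving coefficient $\partial a^{(k)}/\partial x^3$ is precisely the transversal derivative bounded uniformly in $k$ by the sequential transversal Lipschitz estimate \eqref{sec}. Hence $d\eta^{(k)}$ is uniformly bounded, Hartman's Theorem applies on each plane, and $X$ is uniquely integrable. The field $Y=\partial/\partial x^2+b\,\partial/\partial x^3$ is handled symmetrically, freezing $x^1$ and using $d\eta^{(k)}=\tfrac{\partial b^{(k)}}{\partial x^3}\,dx^3\wedge dx^2$ with the second inequality in \eqref{sec}.

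The hard part, and the reason this works where a naive application of Hartman's Theorem would not, lies exactly in this choice of reduction. Were one to treat $X$ as a time-dependent field on the full two-dimensional space $(x^2,x^3)$, the exterior derivative of the approximating form would also involve the \emph{tangential} derivative $\partial a^{(k)}/\partial x^2$, which sequential transversal Lipschitzness does not control. Restricting to the invariant planes $\{x^2=\text{const}\}$ eliminates that term and leaves only the transversal derivative, which is the one quantity bounded by \eqref{sec}; the delicate points to verify are that these planes really are invariant under the $X$-flow and that all the estimates are uniform both in the frozen coordinate and in $k$. Finally, the uniform Lipschitz dependence on initial conditions furnished by the last clause of Theorem~\ref{thm-hartmann} ensures that the reconstructed surface $\mathcal W_{x_0}$ varies continuously with its generating curves, which closes the reduction of the first paragraph and completes the proof of unique integrability of $E$.
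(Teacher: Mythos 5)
Your proposal is correct and follows essentially the same route as the paper: restrict to the invariant planes $\{x^2=\mathrm{const}\}$, apply Hartman's Theorem~\ref{thm-hartmann} with the approximating $C^1$ one-forms built from $a^{(k)}$ (resp.\ $b^{(k)}$), and observe that after the restriction the only surviving coefficient of $d\eta^{(k)}$ is the transversal derivative $\partial a^{(k)}/\partial x^3$, which is exactly what \eqref{sec} bounds uniformly in $k$. The surrounding material in your first and third paragraphs (deducing uniqueness of the integral surface of $E$ from uniqueness of the flows of $X$ and $Y$) reproduces the argument the paper gives immediately after this corollary rather than the corollary itself, but it is consistent with the paper's treatment.
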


\begin{proof}
We will give the proof for $X$, that for $Y$ is exactly the same. Since $X$ has the form
$$
X = \frac{\partial}{\partial x^{1}} + a\frac{\partial}{\partial x^3},
$$
its solutions always lie in the $\frac{\partial}{\partial x^{1}},\frac{\partial}{\partial x^{3}}$ plane.
Therefore given a point $(x^1_0,x^2_0,x^3_0)$, it is sufficient to consider the restriction to such a plane.
Then the $C^1$ differential 1-forms defined in Theorem \ref{thm-hartmann} are
$$
\eta_1 = dt - dx^{1} \quad \quad \eta_2 = a(x)dt - dx^{3}
$$
where $x=(x^1,x^2_0,x^3)$, and for the approximations we can write
$$
\eta^{k}_1 = dt - dx^{1} \quad \quad \eta^{k}_2 = a^{(k)}(x)dt - dx^{3}
$$
where $a^{(k)}(x)$ are functions given in equation \eqref{eq-vecfieldsk}, again for some fixed $x^2_0$. But then by sequential transversal Lipschitz assumption and choice of
coordinates we have that $|\frac{\partial a^{(k)}}{\partial x^3}|<C$ for all $k$ and
$$
d\eta^{k}_1 = 0 \quad \quad d\eta^{k}_2 = \frac{\partial a^{(k)}}{\partial x^3}dx^3 \wedge dt
$$
(since we restrict to $x^2=\text{const}$ planes) and the requirements of Theorem \ref{thm-hartmann} are satisfied which proves that $X$ is uniquely integrable.
\end{proof}

Now we have that $X$ and $Y$ are uniquely integrable at every point. Assume there exist a point $p \in\mathcal U$ such that through $p$ there
exist two integral surfaces $\mathcal{W}_1,\mathcal{W}_2$. This means both surfaces are integral manifolds of $E$ and in particular the
restriction of $X$ and $Y$ to their tangent space are uniquely integrable vector fields. Therefore there exists $\epsilon_1$ such
that the integral curve $e^{tX}(p)$ for $|t|\leq \epsilon_1$ belongs to both surfaces. Now consider an integral curve of $Y$ starting at the
points
of $e^{tX}(p)$, that is $e^{sY}\circ e^{tX}(p)$. For $\epsilon_1$ small enough, there exists $\epsilon_2$ small enough such that for every
$|t|< \epsilon_1$ and $|s|< \epsilon_2$ this set is inside both surfaces since $Y$ is also uniquely integrable ($\epsilon_i$ only depend on
norms $|X|$, $|Y|$ and size of $\mathcal{W}_i$ and therefore can be chosen uniformly independent of point). This set is a $C^{1}$ disk  and therefore $\mathcal{W}_1$ and $\mathcal{W}_2$ coincide on
an open domain. Applying this to every point $p\in U$ we obtain that through every point in $U$ there passes a single local integral
manifold.
This concludes the proof of the uniqueness.

\bigskip
Stefano Luzzatto \\
\textsc{Abdus Salam International Centre for Theoretical Physics (ICTP), Strada Costiera 11, Trieste, Italy}\\
 \textit{Email address:} \texttt{luzzatto@ictp.it}

\medskip
Sina T\"ureli \\
\textsc{Abdus Salam International Centre for Theoretical Physics (ICTP), Strada Costiera 11, Trieste, Italy} and \textsc{International School
for Advanced Studies (SISSA), Via Bonomea 265, Trieste}\\
 \textit{Email address:} \texttt{sinatureli@gmail.com}

\medskip
Khadim M. War\\
\textsc{Abdus Salam International Centre for Theoretical Physics (ICTP), Strada Costiera 11, Trieste, Italy} and \textsc{International School
for Advanced Studies (SISSA), Via Bonomea 265, Trieste}\\
 \textit{Email address:} \texttt{kwar@ictp.it}

 \end{document}